\newtheorem{theorem}{Theorem}
\newtheorem{proposition}[theorem]{Proposition}
\newtheorem{lemma}[theorem]{Lemma}
\newtheorem{corollary}[theorem]{Corollary}
\theoremstyle{remark}
\newtheorem{example}[theorem]{Example}
\newtheorem{remark}[theorem]{Remark}
\numberwithin{theorem}{section}
\numberwithin{equation}{section}
\DeclareMathOperator{\Z}{\mathbb{Z}}
\DeclareMathOperator{\N}{\mathbb{N}}
\DeclareMathOperator{\dg}{dg}
\DeclareMathOperator{\Des}{Des}
\DeclareMathOperator{\Asc}{Asc}
\DeclareMathOperator{\maj}{maj}
\DeclareMathOperator{\Inv}{Inv}
\DeclareMathOperator{\inv}{inv}
\DeclareMathOperator{\coinv}{coinv}
\newcommand{\hsigma}{\hat{\sigma}}
\newcommand{\naf}{\mathrm{NAF}}
\newcommand{\hdg}{\hat{\dg}}
\renewcommand{\hat}{\widehat}
\begin{document}

\title{Combinatorial proof of a permuted basement Macdonald polynomial identity}
\author{Daniel Orr and Johnny Rivera, Jr.}
\date{\today}
\maketitle

\begin{abstract}
A well-known and fundamental property of the Macdonald polynomials $P_\lambda(x;q,t)$ is their invariance under the transformation sending $(q,t)$ to $(q^{-1},t^{-1})$. Recently, Concha and Lapointe~\cite{CL} showed that this property extends in an interesting, nontrivial way to an identity for partially symmetric Macdonald polynomials. Their identity played a key role in the work \cite{BWO} linking partially symmetric Macdonald polynomials to parabolic flag Hilbert schemes. In this paper, we refine the Concha-Lapointe identity to a sub-family of Alexandersson's permuted basement Macdonald polynomials \cite{A} and give a combinatorial proof of the refined identity. We show also that the Concha-Lapointe identity is equivalent to the assertion that (normalized) partially symmetric Macdonald polynomials are fixed under the Kazhdan-Lusztig involution.
\end{abstract}

\section{Introduction}

The Macdonald polynomials\footnote{See \cite[Chapter VI]{M}, \cite[Chapter 3]{C:book}, and \cite{HHL} for background on the theory of Macdonald polynomials which is most useful for this paper.} $P_\lambda(x;q,t)$ \cite{M}---which are symmetric polynomials in variables $x=(x_1,\dotsc,x_n)$ depending rationally on parameters $q,t$---satisfy a number of beautiful identities. One of these is their invariance under inversion of the parameters: $P_\lambda(x;q^{-1},t^{-1})=P_\lambda(x;q,t)$.

In this paper, we consider a larger family of Macdonald polynomials obtained by relaxing the requirement of symmmetry in the variables $x_1,\dotsc,x_n$. Namely, for arbitrary $m$ with $0\le m\le n$, we consider polynomials which are symmetric in $x_1,\dotsc,x_m$, with no condition of symmetry imposed upon $x_{m+1},\dotsc,x_n$. These are the partially symmetric Macdonald polynomials $P_{\lambda|\gamma}(x;q,t)$ studied in \cite{G,L}, where $\lambda$ is a partition (having at most $m$ parts) and $\gamma$ is a composition (of length $n-m$). The $P_{\lambda|\gamma}$ interpolate between the symmetric $P_\lambda$ (when $m=n$) and the non-symmetric Macdonald polynomials $E_\gamma$ \cite{C} (when $m=0$).

One motivation for studying the partially symmetric polynomials $P_{\lambda|\gamma}$ is that many features of the $P_\lambda$ admit nontrivial extensions to the partially symmetric setting. For instance, Lapointe \cite{L} formulated a positivity conjecture for the $P_{\lambda|\gamma}$ extending the well-known Macdonald positivity theorem proved by Haiman \cite{H}. Subsequent work of Concha-Lapointe \cite{CL} established a number of fundamental properties of the $P_{\lambda|\gamma}$, including the following extension of the identity $P_\lambda(x;q,t)=P_\lambda(x;q^{-1},t^{-1})$.

\begin{theorem}[\cite{CL}]\label{CL theorem}
For any partition $\lambda$ with at most $m$ parts and $\gamma=(\gamma_1,\dotsc,\gamma_{n-m})\in(\Z_{\ge 0})^{n-m}$, we have
\begin{equation}\label{CL identity}
P_{\lambda|\gamma}(x_1,\dotsc,x_m,qx_{n},\dotsc,qx_{m+1};q^{-1},t^{-1})
= q^{\gamma_1+\dotsm+\gamma_{n-m}}t^{\mathrm{inv}(\gamma)-\ell(\omega_0^{[m+1,n]})}T_{\omega_0^{[m+1,n]}}P_{\lambda|\gamma},
\end{equation}
where $\omega_0^{[m+1,n]}\in S_{[m+1,n]}$ is the longest element of the symmetric group of the set $[m+1,n]=\{m+1,m+2,\dotsc,n\}$, $T_{\omega_0^{[m+1,n]}}$ is the corresponding Demazure-Lusztig operator (see \S\ref{MacDL}), and $\mathrm{inv}(\gamma)=|\{i<j : \gamma_i>\gamma_j\}|$.
\end{theorem}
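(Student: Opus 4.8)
The plan is to deduce Theorem~\ref{CL theorem} from a finer identity for individual permuted basement Macdonald polynomials, which can be attacked head-on with Alexandersson's combinatorial formula. First I would record the expansion of the partially symmetric polynomial $P_{\lambda|\gamma}$ as an explicit sum of permuted basement polynomials $E_\mu^\sigma$, with $\sigma$ ranging over a set of coset representatives encoding the symmetrization in $x_1,\dotsc,x_m$. Because both sides of \eqref{CL identity} are already symmetric in $x_1,\dotsc,x_m$ and the operator $T_{\omega_0^{[m+1,n]}}$ acts only on the last $n-m$ slots, it is enough to establish a corresponding refined identity for each $E_\mu^\sigma$ in this expansion; summing over $\sigma$ then reassembles \eqref{CL identity}.

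The heart of the matter is the combinatorial formula $E_\mu^\sigma(x;q,t)=\sum_F x^F\,q^{\maj(F)}\,t^{\coinv(F)}\,w(F)$, where $F$ runs over the non-attacking fillings of the augmented diagram $\hdg(\mu)$ with basement dictated by $\sigma$, and $w(F)$ is a product of arm/leg factors of the shape $(1-t)/(1-q^{\mathrm{arm}+1}t^{\mathrm{leg}+1})$. The substitution $(q,t)\mapsto(q^{-1},t^{-1})$ inverts each of these ingredients, while the variable change $x_{m+i}\mapsto q\,x_{n+1-i}$ simultaneously reverses the nonsymmetric variables and introduces a global factor $q^{\gamma_1+\dotsm+\gamma_{n-m}}$. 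I would match all of this on the filling side by a single bijection $\Phi$ that reverses the order of the last $n-m$ columns of the diagram and transforms the basement accordingly, so that $\Phi$ carries each monomial $x^F$ to precisely the reversed-variable monomial demanded by the left-hand side.

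The technical core is then a bookkeeping lemma asserting that, under $\Phi$ composed with parameter inversion, each of $\maj$ and $\coinv$ is replaced by a fixed total (depending only on $\mu$, $\sigma$, and $\gamma$) minus its original value---thereby generating exactly the prefactor $t^{\inv(\gamma)-\ell(\omega_0^{[m+1,n]})}$ together with the $q^{\gamma_1+\dotsm+\gamma_{n-m}}$ already noted---while the arm/leg weights $w(F)$ are restored after inversion. Comparing the major index and coinversion counts before and after the column reversal, and verifying that the non-attacking condition and the relevant inversion triples are reorganized compatibly, is where I expect the genuine difficulty to lie; this step will likely demand a careful case analysis on the relative order of entries sharing a row or an adjacent column.

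Finally, I would identify the reversed-basement polynomial produced by $\Phi$ with the result of applying $T_{\omega_0^{[m+1,n]}}$ to $E_\mu^\sigma$, invoking the intertwiner relations $T_i\,E^\sigma=E^{s_i\sigma}+(\text{lower-order terms})$ for the Demazure--Lusztig operators to convert a basement reversal on $[m+1,n]$ into the action of $T_{\omega_0^{[m+1,n]}}$. Summing the resulting refined identities over the coset representatives $\sigma$ then delivers \eqref{CL identity}, and the equivalence with fixedness under the Kazhdan--Lusztig involution noted in the abstract provides a useful conceptual check on the normalization of the prefactors.
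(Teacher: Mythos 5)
Your overall architecture is the same as the paper's: expand $P_{\lambda|\gamma}$ into permuted basement polynomials $E_\mu^{\pi_1}$ over minimal coset representatives, prove a refined identity for each summand by a weight-preserving bijection on non-attacking fillings in the Haglund--Haiman--Loehr/Alexandersson formula, and reassemble. The genuine gap is in the bijection itself. Reversing the last $n-m$ \emph{columns} of the diagram changes the shape from $\lambda|\gamma$ to $\lambda|(\gamma_{n-m},\dotsc,\gamma_1)$ -- so you would be comparing fillings of two different compositions -- and, worse, it does not touch the monomial at all: $x^F=\prod_{u}x_{F(u)}$ depends only on the \emph{entries} of the filling, not on which columns they occupy. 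The variable reversal $x_{m+i}\mapsto x_{n+1-i}$ (together with $x_i\mapsto x_{m+1-i}$ for $i\le m$, which is harmless by symmetry but must still be accounted for) has to be realized by complementing the entries within each block $[m]$ and $[m+1,n]$, keeping the diagram fixed; equivalently, by passing from basement $\pi_1\pi_2$ to $\pi_1^c\pi_2^c$ where $\pi^c$ is the complement within its block. This is exactly the map $f$ the paper uses, and the resulting refined identity is $E_\mu^{\pi_1^c\pi_2^c}(x;q,t)=q^{-p}E_\mu^{\pi_1\pi_2}(x_m,\dotsc,x_1,qx_n,\dotsc,qx_{m+1};q^{-1},t^{-1})$ with $p=\sum_{i>m}\mu_i$.

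A second, related error: the $q$-shift on the last $n-m$ variables does \emph{not} produce a global factor $q^{\gamma_1+\dotsm+\gamma_{n-m}}$. It produces $q^{L}$ where $L$ is the number of boxes whose entry lies in $[m+1,n]$, and $L$ varies with the filling (large entries can occur in the first $m$ columns and vice versa). The identity only closes because the major index satisfies $\maj(f(\hsigma))=\maj'(\hsigma)-p+L$, so the filling-dependent $q^{L}$ cancels against the discrepancy in $\maj$, leaving the global $q^{p}$; this cancellation is the real content of the ``bookkeeping lemma'' you defer, and your proposal as written would not discover it because the claimed prefactor is already wrong. Two smaller points you gloss over: (i) after complementation the basement $\pi_1^c$ is generally not a minimal coset representative, so one needs that $E_\mu^{\pi_1}$ depends only on the coset $\pi_1(S_m)_\lambda$ and that $\pi_1\mapsto\underline{\pi_1^c}$ permutes $S_m^\lambda$; and (ii) with the normalization $E_\mu^\pi=t^{-\ell_\mu(\pi)}T_\pi E_\mu$ the relation $T_{\omega_0^{[m+1,n]}}E_\mu^{\pi_1}=t^{\ell_\mu(\omega_0^{[m+1,n]})}E_\mu^{\pi_1\omega_0^{[m+1,n]}}$ is exact, with no lower-order terms, and the exponent $\ell_\mu(\omega_0^{[m+1,n]})=\ell(\omega_0^{[m+1,n]})-\mathrm{inv}(\gamma)$ is precisely where the power of $t$ in \eqref{CL identity} comes from; an intertwiner relation with uncontrolled correction terms would break the reassembly.
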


We observe that \eqref{CL identity} reduces to the aforementioned identity when $m=n$, since in that case $\gamma$ is empty. Moreover, when $m=0$, it produces a known identity for the non-symmetric Macdonald polynomials; see \cite[Prop. 3.3.3]{C:book} or \cite[Prop. 6]{A} for the non-symmetric identity and also \cite[Theorem 4.8]{I} for its connection to the Kazhdan-Lusztig involution. The shift of variables $x_{m+1},\dotsc,x_n$ by the parameter $q$ is an interesting feature of the partially symmetric setting. This shift is not present in the symmetric case, while in the non-symmetric case it cancels with $q^{\gamma_1+\dotsm+\gamma_{m-n}}$ by homogeneity.

The general case of \eqref{CL identity} played a key role in the work \cite{BWO} of Bechtloff Weising and the first named author, where it was used to show that an explicit modified form of the $P_{\lambda|\mu}$ is sent, under a canonical isomorphism due Carlsson-Gorsky-Mellit \cite{CGM}, to the basis of torus fixed points for the equivariant $K$-theory of parabolic flag Hilbert schemes. (The precise statement proved in \cite{BWO} was formulated as a conjecture by Goodberry and the first named author in \cite{GO}.) In this context, the analog of Macdonald positivity \cite{H} remains open; see \cite{K,L} for positivity conjectures and \cite{BHMPS} for very recent progress involving newly introduced non-symmetric LLT polynomials.

The proof of \eqref{CL identity} in \cite{CL} is algebraic and employs a characterization of the $P_{\lambda|\gamma}$ as eigenfunctions of certain difference operators. Our original aim in this work was to give a combinatorial proof of \eqref{CL identity} based on the Haglund-Haiman-Loehr formula \cite{HHL}, thereby explaining in combinatorial terms the peculiarities of \eqref{CL identity} required by the partially symmetric setting. The combinatorics which we explore requires carefully manipulating statistics defined on fillings of cell-based diagrams that we associate to $\lambda|\gamma$. In the course of working out the details, we found that most natural statement involves an equality of summands from each side of \eqref{CL identity} which are instances of Alexandersson's permuted basement polynomials \cite{A}. Our main result, Theorem \ref{complement thm}, thus gives a refinement of \eqref{CL identity} to these summands.

We also give the following interpretation of \eqref{CL identity} in terms of the Kazhdan-Lusztig involution~$*$, extending \cite[Theorem 4.8]{I} and providing yet another proof of the Concha-Lapointe identity.

\begin{theorem}\label{KL theorem}
Under the Kazhdan-Lusztig involution $*$ on $\mathbb{Q}(q,t)[x_1,\dotsc,x_n]^{S_m}$ (see \eqref{KL eM}),
\begin{equation}\label{KL P}
P_{\lambda|\gamma}^* = t^{\mathrm{inv}(\lambda|\gamma)}P_{\lambda|\gamma}
\end{equation}
for all $\lambda|\gamma$ as in Theorem~\ref{CL theorem} above, where $\mathrm{inv}(\lambda|\gamma)=\mathrm{inv}(\gamma)+|\{(i,j)\in[m]\times[n-m]:\lambda_i>\gamma_j\}|$. Furthermore, equations \eqref{CL identity} and \eqref{KL P} are equivalent.
\end{theorem}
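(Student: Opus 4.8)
The plan is to derive both the eigenfunction identity \eqref{KL P} and its equivalence with \eqref{CL identity} from two inputs: the nonsymmetric case of the Kazhdan--Lusztig identity, due to Ion \cite[Theorem 4.8]{I}, and a Hecke-symmetrization formula expressing $P_{\lambda|\gamma}$ in terms of nonsymmetric Macdonald polynomials. I would begin with the equivalence, since once the definition \eqref{KL eM} of $*$ is unwound this part is essentially formal. Writing $*$ as the composition of parameter inversion $(q,t)\mapsto(q^{-1},t^{-1})$, the variable shift $x_{m+1},\dots,x_n\mapsto qx_n,\dots,qx_{m+1}$, and the Demazure--Lusztig operator $T_{\omega_0^{[m+1,n]}}$, together with an explicit monomial normalization in $q$ and $t$, I would substitute this decomposition into \eqref{KL P}. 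The expression $P_{\lambda|\gamma}^*$ then becomes, up to the normalizing monomial, exactly the two sides of \eqref{CL identity}, so the claim reduces to checking that this monomial reconciles the prefactor $q^{\gamma_1+\dots+\gamma_{n-m}}t^{\mathrm{inv}(\gamma)-\ell(\omega_0^{[m+1,n]})}$ of \eqref{CL identity} with the eigenvalue $t^{\mathrm{inv}(\lambda|\gamma)}$ of \eqref{KL P}. This is a bookkeeping exercise in the exponents, run in both directions to obtain the equivalence.

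For an independent proof of \eqref{KL P}---and hence a new proof of \eqref{CL identity}---I would reduce to the nonsymmetric case $m=0$, where \eqref{KL P} reads $E_\gamma^*=t^{\mathrm{inv}(\gamma)}E_\gamma$ and is precisely \cite[Theorem 4.8]{I}. Using the formula expressing $P_{\lambda|\gamma}$ as the image of a suitable nonsymmetric Macdonald polynomial $E_\mu$ under the $S_m$-Hecke symmetrizer, I would track how $*$ propagates through the symmetrizer. Since $*$ is semilinear with respect to the Hecke action, commuting it past the symmetrizer built from $T_1,\dots,T_{m-1}$ reproduces a symmetrized expression, and the base-case eigenvalue is modified by a power of $t$ equal to the number of inversions created between the partition part $\lambda$ and the composition part $\gamma$. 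Identifying this correction with $|\{(i,j)\in[m]\times[n-m]:\lambda_i>\gamma_j\}|$ yields the full exponent $\mathrm{inv}(\lambda|\gamma)=\mathrm{inv}(\gamma)+|\{(i,j):\lambda_i>\gamma_j\}|$.

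The main obstacle is the symmetrization step: one must verify that $*$ preserves the $S_m$-invariant subspace $\mathbb{Q}(q,t)[x_1,\dots,x_n]^{S_m}$ and compute exactly the $t$-power acquired when $*$ is commuted past the $S_m$-symmetrizer. This hinges on a precise description of the action of $*$ on the generators $T_1,\dots,T_{m-1}$ and on an inductive count that isolates the cross-term $|\{(i,j):\lambda_i>\gamma_j\}|$ while the inversions internal to $\lambda$ drop out---as they must, since $P_{\lambda|\gamma}$ is symmetric in $x_1,\dots,x_m$. Should the explicit symmetrizer computation prove delicate, an alternative is to argue that $P_{\lambda|\gamma}^*$ and $t^{\mathrm{inv}(\lambda|\gamma)}P_{\lambda|\gamma}$ satisfy the same defining properties (symmetry in the first $m$ variables together with the relevant triangularity), forcing equality by uniqueness. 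In either case, once the combinatorial correction is pinned down, the equivalence established in the first step delivers the new proof of \eqref{CL identity}.
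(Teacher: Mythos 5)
Your second step---deriving \eqref{KL P} from Ion's theorem by pushing the Hecke symmetrizer through $*$---is essentially the paper's argument. The paper does exactly this: it uses $e_m^*=e_m$, so that $e_m\tilde E_\mu$ is automatically $*$-fixed, and then observes that $e_m\tilde E_\mu$ equals $\tilde P_{\lambda|\gamma}$ times a scalar $\sum_{\sigma\in(S_m)_\lambda}t^{\ell(\sigma)-\ell(\omega_{0,\lambda}^{[m]})/2}$ that is invariant under $t\mapsto t^{-1}$; the exponent $\mathrm{inv}(\lambda|\gamma)$ then comes out of the purely combinatorial identity \eqref{inv simplify} relating $\inv(\mu)$ for $\mu=\lambda|\gamma$ to $\inv(\gamma)$ plus the cross-term, not from any correction acquired ``while commuting $*$ past the symmetrizer.'' That is a cleaner mechanism than the inductive commutation you describe, but it is the same route, and your fallback via uniqueness of the eigenfunction would also be workable here.

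The genuine gap is in your first step. You propose to ``write $*$ as the composition of parameter inversion, the variable shift $x_{m+1},\dotsc,x_n\mapsto qx_n,\dotsc,qx_{m+1}$, and the operator $T_{\omega_0^{[m+1,n]}}$, together with an explicit monomial normalization,'' and then treat the equivalence as bookkeeping. But the definition \eqref{KL eM} gives $*$ as $f\mapsto t^{\ell(\omega_0)}T_{\omega_0}^{-1}\omega_0(f(q^{-1},t^{-1}))$ with the \emph{full} long element $\omega_0=\omega_0^{[n]}$, the reversal of \emph{all} $n$ variables, and no $q$-shift whatsoever. The decomposition you want to ``unwind'' is precisely the nontrivial content of the equivalence, and it does not hold at the level of operators: it is only valid after evaluation on $P_{\lambda|\gamma}$. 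The paper obtains it by inserting the Cherednik eigenoperator $Y_{-\varpi_m}$ (built from $T_i^{-1}$'s and the cyclic shift $\pi^{-1}$), using its eigenvalue \eqref{YE}--\eqref{eigenvalues} on $E_\mu$ to trade $T_{\omega_0}^{-1}\omega_0$ for $q^{-p}T_{\omega_0^{[m+1,n]}}^{-1}\omega_0^{[m+1,n]}$ together with the $q$-shift of the last $n-m$ variables, and then matching $t$-exponents via \eqref{t exponents}, which again requires the explicit eigenvalue formula. Without this (or an equivalent device), your ``bookkeeping exercise'' has nothing to book-keep: the prefactor $q^{\gamma_1+\dotsm+\gamma_{n-m}}t^{\inv(\gamma)-\ell(\omega_0^{[m+1,n]})}$ of \eqref{CL identity} cannot be compared to $t^{\mathrm{inv}(\lambda|\gamma)}$ until the two involutions have been expressed in a common form, and that is where all the work lies.
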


\section*{Acknowledgements}

We thank Milo Bechtloff Weising and Olya Mandelshtam for helpful discussions. D.O. gratefully acknowledges support from the Simons Foundation.

\section{Macdonald polynomials}

\subsection{Notation and conventions}

For any set $A$, let $S_A$ be the symmetric group of permutations of $A$, with the group operation given by composition: $\pi\rho=\pi\circ\rho$. For all $a,b\in\mathbb{N}$, let $[a,b]=\{i\in\mathbb{N} : a\le i\le b\}$, and let $\omega_0^{[a,b]}$ denote the longest element of $S_{[a,b]}$ given by $\omega_0^{[a,b]}(i) = a+b-i$ for all $i\in [a,b]$; when $a>b$, the set $[a,b]$ is empty, the group $S_{[a,b]}$ is trivial, and $\omega_0^{[a,b]}$ is the identity element. For all $n\in\mathbb{N}$, we write $[n]=[1,n]=\{1,\dotsc,n\}$, $S_n = S_{[n]}$, and $\omega_0^{[n]} = \omega_0^{[1,n]}$. 

For arbitrary $n\in \mathbb{N}$, let $\mathbb{Q}(q,t)[x_1,\dotsc,x_n]$ be the algebra of polynomials in $x_1,\dotsc,x_n$ over the field $\mathbb{Q}(q,t)$ of rational functions in indeterminates $q$ and $t$. The symmetric group $S_n$ acts naturally from the left on $\mathbb{Q}(q,t)[x_1,\dotsc,x_n]$ by algebra automorphisms, which are specified by $\pi(x_i) = x_{\pi(i)}$ for all $\pi\in S_n$ and $i\in [n]$. For any subgroup $G\subset S_n$, let $\mathbb{Q}(q,t)[x_1,\dotsc,x_n]^G$ the subalgebra of polynomials which are invariant under $G$. The group $S_n$ also acts naturally from the left on $\mathbb{Z}^n$ by the linear automorphisms $\pi(\mu)=(\mu_{\pi^{-1}(1)},\dotsc,\mu_{\pi^{-1}(n)})$ for all $\pi\in S_n$ and all $\mu=(\mu_1,\dotsc,\mu_n)\in\mathbb{Z}^n$.

\subsection{Symmetric and non-symmetric Macdonald polynomials}\label{MacDL}

For an element $\lambda\in(\mathbb{Z}_{\ge 0})^n$ such that $\lambda_1\ge \dotsm\ge \lambda_n$, i.e., a partition with at most $n$ parts, denote by $P_\lambda=P_\lambda(x_1,\dotsc,x_n;q,t)$ the (symmetric) \textit{Macdonald polynomial} of \cite{M}. These form a $\mathbb{Q}(q,t)$-basis of the space $\mathbb{Q}(q,t)[x_1,\dotsc,x_n]^{S_n}$ of symmetric polynomials.

For a composition $\mu\in (\mathbb{Z}_{\ge 0})^n$, denote by $E_\mu=E_\mu(x_1,\dotsc,x_n;q,t)$ the (type $GL_n$) \textit{non-symmetric Macdonald polynomial} in the conventions of \cite{HHL}. These form a $\mathbb{Q}(q,t)$-basis of the space $\mathbb{Q}(q,t)[x_1,\dotsc,x_n]$ of all polynomials.


For $i=1,\dotsc,n-1$, let $\sigma_i=(i,i+1)\in S_n$ be the simple transposition and let $T_i$ be the \textit{Demazure-Lusztig} operator on $\mathbb{Q}(q,t)[x_1,\dotsc,x_n]$ given by
\begin{equation*}
T_i f = t\sigma_i(f)+(t-1)x_{i+1}\frac{\sigma_i(f)-f}{x_i-x_{i+1}}.
\end{equation*}
For $\pi\in S_n$ and any reduced expression $\pi=\sigma_{i_1}\dotsc \sigma_{i_\ell}$, where $\ell=\ell(\pi)$ is the length of $\pi$, it is well-known that the operator $T_\pi$ defined by
\begin{equation*}
T_\pi=T_{i_1}\dotsm T_{i_\ell}
\end{equation*}
does not depend on the choice of reduced expression.

The relationship between symmetric and non-symmetric Macdonald polynomials can then be stated as follows. By \cite[(3.3.14)]{C:book}, for a partition $\lambda$ with at most $n$ parts, one has
\begin{equation*}
P_\lambda = \sum_{\pi\in S_n^\lambda} T_\pi E_\lambda,
\end{equation*}
where $S_n^\lambda\subset S_n$ is the set of minimal coset representatives for $S_n/(S_n)_\lambda$, and $(S_n)_\lambda$ is the stabilizer of $\lambda$. We note also, by \cite[(3.3.40)]{C:book}, that whenever $\sigma_i(\mu)=\mu$, one has
\begin{equation}\label{Estab}
T_iE_\mu=tE_\mu.
\end{equation}

\subsection{Partially symmetric and permuted basement Macdonald polynomials}

Let $m$ be an integer such that $0\le m\le n$. Regard $S_m=S_{[m]}$ as the subgroup of $S_n$ fixing $m+1,\dotsc,n$. For $\lambda\in(\mathbb{Z}_{\ge 0})^m$ a partition with at most $m$ parts and $\gamma\in(\mathbb{Z}_{\ge 0})^{n-m}$ a composition, the \textit{partially symmetric Macdonald polynomial} $P_{\lambda|\gamma}$ of \cite{G,L} is defined---following the the conventions of \cite{G}---by
\begin{equation}\label{psPdef}
P_{\lambda|\gamma} = \sum_{\pi\in S_m^\lambda} T_\pi E_{\mu},
\end{equation}
where $\mu=(\lambda,\gamma)\in(\mathbb{Z}_{\ge 0})^n$. These form a $\mathbb{Q}(q,t)$-basis of $\mathbb{Q}(q,t)[x_1,\dotsc,x_n]^{S_m}$, the algebra of polynomials which are symmetric in $x_1,\dotsc,x_m$.


Our refinement of Theorem \ref{CL theorem} concerns the summands of \eqref{psPdef}. For any $\pi\in S_n$ and $\mu\in(\mathbb{Z}_{\ge 0})^n$, define
\begin{equation*}
E_\mu^\pi = t^{-\ell_\mu(\pi)}T_\pi E_\mu,
\end{equation*}
where $\ell_\mu(\pi)$ is the number of pairs $(i,j)\in[n]^2$ such that $i<j$, $\pi(i) >\pi(j)$, and $\mu_i\le\mu_j$. In the next section, we explain how the $E_\mu^\pi$ connect to the combinatorially-defined \textit{permuted basement Macdonald polynomials} based on diagram fillings, with $\mu$ specifying the shape and $\pi$ the basement of the filling; in particular, see Remark~\ref{Remark on A}.

\section{Combinatorics}

In this section, we recall the combinatorial formula of Haglund, Haiman, and Loehr \cite{HHL} for non-symmetric Macdonald polynomials and its extension to permuted basement Macdonald polynomials due to Alexandersson \cite{A}. Most of the following definitions come directly from \cite{HHL} or from \cite{A} (adapted to the conventions of \cite{HHL}). We note that one difference arises, namely: when we define the inversion number $\inv(\hsigma)$ of an augmented filling $\hsigma$, we also take into account the permuted basements that we are allowing. In Haglund, Haiman, and Loehr's work, the authors assume that the basement of their diagrams contains the increasing permutation. The slight adjustment in our definition of the inversion number takes into account that our basements need not be the increasing permutation and ensures that the inversion and coninversion numbers coincide with the number of inversion and coinversion triples, respectively, which we define later.

\subsection{Column Diagrams} 

We may visualize a composition $\mu=(\mu_1,\dotsc,\mu_n)\in(\mathbb{Z}_{\ge 0})^n$ as a diagram of $n$ columns with $\mu_i$ boxes in column $i$. The \textit{column diagram} of $\mu$ is the set $$\dg'(\mu)=\{(i,j)\in(\Z_{\geq0})^2:1\leq i\leq n, 1\leq j\leq\mu_i\}$$ in Cartesian coordinates. That is to say that $i$ indexes the columns and $j$ indexes the rows of the column diagram of $\mu$. Further, the \textit{augmented diagram} of $\mu$ is given by $$\hat{\dg}(\mu)=\dg'(\mu)\cup\{(i,0):1\leq i\leq n\},$$ where we simply adjoin $n$ boxes to row 0. For $\mu\in(\mathbb{Z}_{\ge 0})^n$ and $u=(i,j)\in\dg'(\mu)$, we define
\begin{align*}
    \text{leg}(u)&=\{(i,j')\in\dg'(\mu):j'>j\},\\
    \text{arm}^{\text{left}}(u)&=\{(i',j)\in\dg'(\mu):i'<i,\mu_{i'}\leq\mu_i\},\\
    \text{arm}^{\text{right}}(u)&=\{(i',j-1)\in\hat{\dg}(\mu):i'>i,\mu_{i'}<\mu_i\},\\
    \text{arm}(u)&=\text{arm}^{\text{left}}(u)\cup\text{arm}^{\text{right}}(u),
\end{align*}
and we set 
\begin{align*}
    l(u)&=|\text{leg}(u)|=\mu_i-j,\\
    a(u)&=|\text{arm}(u)|.
\end{align*}

\begin{example}\label{diagram example}
    The diagram below depicts the diagram $\mu=(4,3,3,2,3,4)$ and the box $u =(3,2)$ with the cells belonging to $\text{leg}(u)$, $\text{arm}^{\text{left}}(u)$, and $\text{arm}^{\text{right}}(u)$ marked by $x$, $y$, and $z$, respectively. Here, $l(u)=1$ and $a(u)=2$.

    \begin{center}
\begin{tikzpicture}[scale=0.5]
  \begin{scope}
    \foreach \x/\val in {0/1,1/3,2/2,3/6,4/4,5/5} {
      \draw (\x,1) rectangle ++(1,-1);
    }
    \foreach \x/\val in {0/1,1/3,2/2,3/6,4/4,5/5} {
      \draw (\x,2) rectangle ++(1,-1);
    }
    \foreach \x/\val in {0/1,1/3,2/2} {
      \draw (\x,3) rectangle ++(1,-1);
    }
    \draw (4,3) rectangle ++(1,-1);
    \draw (5,3) rectangle ++(1,-1);
    \draw (0,4) rectangle ++(1,-1);
    \draw (5,4) rectangle ++(1,-1);
    \node at (2+0.5,1.5) {$u$};
    \node at (2+0.5,2.5) {$x$};
    \node at (1+0.5,1.5) {$y$};
    \node at (3+0.5,0.5) {$z$};
        
  \end{scope}
\end{tikzpicture}
\end{center}

\end{example}

\subsection{Fillings and Statistics}
A \textit{filling} of $\mu$ is a function $$\sigma:\dg'(\mu)\rightarrow [n],$$ where $[n]=\{1,2,\dotsc,n\}$. For a fixed permutation $\pi\in S_n$, the associated \textit{augmented filling} is the associated filling $\hat{\sigma}:\hat{\dg}(\mu)\rightarrow[n]$ of the augmented diagram such that $\hat{\sigma}$ agrees with $\sigma$ on $\dg'(\mu)$, and $\hat{\sigma}((j,0))=\pi(j)$ agrees with the given permutation for all $j\in[n]$. We refer to the permutation $\pi$, or, equivalently row 0 of $\hsigma$ as the \textit{basement} of the augmented filling. Basements given by arbitrary permutations were considered in \cite{A}, but here they are adapted to the conventions of \cite{HHL}. We refer the reader to Example \ref{filling example} below for an illustration of our conventions.

Boxes $u,v\in(\mathbb{Z}_{\ge 0})^2$ such that $u\neq v$ are said to \textit{attack} each other if either
\begin{enumerate}
    \item they are in the same row (they have the form $(i,j), (i',j))$, or
    \item they are in consecutive rows, and the box in the lower row is to the right of the box in the upper row (they have the form $(i,j), (i',j-1)$ where $i'>i$).
\end{enumerate} A filling $\hat{\sigma}:\hat{\dg}(\mu)\rightarrow[n]$ is non-attacking if $\hat{\sigma}(u)\neq\hat{\sigma}(v)$ for each pair of attacking boxes $u,v\in\hat{\dg}(\mu)$. We say that $\sigma:\dg'(\mu)\rightarrow[n]$ is non-attacking if its associated augmented filling $\hat{\sigma}$ is non-attacking. Following \cite{A}, for a composition $\mu$ and a permutation $\pi$, we denote the set of all non-attacking augmented fillings of $\mu$ with basement $\pi$ by $\naf_\mu^\pi$. If $\pi$ is omitted, we assume that it is the identity permutation.

We write $d(u)=(i,j-1)$ to denote the box directly below the box $u=(i,j)$. A \textit{descent} of an augmented filling $\hat{\sigma}:\hat{\dg}(\mu)\rightarrow[n]$ is a box $u\in \dg'(\mu)$ such that $\hat{\sigma}(u)>\hat{\sigma}(d(u))$. Similarly, a box $u\in \dg'(\mu)$ is an \textit{ascent} if $\hsigma(u)<\hsigma(d(u)).$ For such a filling, we define
\begin{align*}
    \Des(\hsigma)&=\{u\in\dg'(\mu) : \text{$u$ is a descent of $\hat{\sigma}$}\},\\
    \Asc(\hsigma)&=\{u\in\dg'(\mu) : \text{$u$ is an ascent of $\hat{\sigma}$}\},\\
    \maj(\hat{\sigma})&=\sum_{u\in\Des(\hat{\sigma})}(l(u)+1).
\end{align*}
The \textit{reading order} on the set $\hat{\dg}(\mu)$ is the total ordering of the boxes in $\hat{\dg}(\mu)$ row by row, from top to bottom, and from right to left. In symbols, $(i,j)<(i',j')$ if $j<j'$ or if $j=j'$ and $i>i'$. An \textit{inversion} of a filling $\hat{\sigma}:\hat{\dg}(\mu)\rightarrow[n]$ is a pair of boxes $u,v\in \hat{\dg}(\mu)$ such that $u,v$ attack each other, $u$ comes before $v$ in the reading order, and $\hat{\sigma}(u)>\hat{\sigma}(v).$ We define $\Inv(\hat{\sigma})$ to be the set of all inversions and 
\begin{align*}
    \inv(\hat{\sigma})&=|\Inv(\hat{\sigma})|-|\{i<j:\text{$\mu_i\leq\mu_j$ and $\hat{\sigma}(i,0)<\hat{\sigma}(j,0)$}\}|-\sum_{u\in\Des(\hat{\sigma})}a(u),\\\coinv(\hat{\sigma})&=\left(\sum_{u\in\dg'(\mu)}a(u)\right)-\inv(\hat{\sigma}).
\end{align*} 

\begin{example}\label{filling example}
    The diagram below depicts an augmented filling $\hsigma\in\naf_{(3,2,2,1,2,3)}^{132645}$, where the permutation $\pi=132645$ is written in one-line notation (so that $\pi(4)=6$ for instance).
\begin{center}
\begin{tikzpicture}[scale=0.5]
  \begin{scope}
    \foreach \x/\val in {0/1,1/3,2/2,3/6,4/4,5/5} {
      \draw (\x,1) rectangle ++(1,-1);
        \node at (\x+0.5,0.5) {\val};
    }
    \foreach \x/\val in {0/1,1/3,2/2,3/6,4/4,5/5} {
      \draw (\x,2) rectangle ++(1,-1);
        \node at (\x+0.5,1.5) {\val};
    }
    \foreach \x/\val in {0/1,1/3,2/2} {
      \draw (\x,3) rectangle ++(1,-1);
        \node at (\x+0.5,2.5) {\val};
    }
    \draw (4,3) rectangle ++(1,-1);
    \node at (4+0.5,2.5) {\textbf{\textcolor{red}{6}}};
    \draw (5,3) rectangle ++(1,-1);
    \node at (5+0.5,2.5) {\textbf{\textcolor{blue}{4}}};
    \draw (0,4) rectangle ++(1,-1);
    \node at (0+0.5,3.5) {\textbf{\textcolor{red}{5}}};
    \draw (5,4) rectangle ++(1,-1);
    \node at (5+0.5,3.5) {\textbf{\textcolor{blue}{1}}};
    \draw[black, ultra thick] (0,1) -- (6,1);
  \end{scope}
\end{tikzpicture}
\end{center}
\end{example}

Recall that the bottom row, the row below the bolded black line, is row 0. The descent set $\Des(\hsigma)$ consists of the two boxes bolded and colored in \textbf{\textcolor{red}{red}}, and the ascent set $\Asc(\hsigma)$ consists of the two boxes bolded and colored in \textbf{\textcolor{blue}{blue}}. We find that $\maj(\hsigma)=2$. In row 2 we have 8 inversions, and in rows 1 and 0 we have 12 inversions each. Between consecutive rows, there are a total of 8 inversions, so $|\Inv(\hsigma)|=40$, which we encourage the reader to verify. Then we find that $\inv(\hsigma)=40-6-5=29$, and $\coinv(\hsigma)=32-29=3$.

\subsection{Combinatorial formula}

For the identity permutation $\pi=12\dotsm n$, the main result of \cite{HHL} is the combinatorial formula for the non-symmetric Macdonald polynomials given below. In \cite{A}, this was extended to basements given by arbitrary permutations. 

\begin{theorem}[{\cite{A,HHL}}]
\label{comb form thm}
    The permuted basement Macdonald polynomials are given by the formula
    \begin{equation*}
    E_\mu^\pi(x;q,t)=\sum_{\sigma\in\naf_\mu^\pi}x^\sigma q^{\maj(\hat{\sigma})}t^{\coinv(\hat{\sigma)}}\prod_{\substack{u\in\dg'(\mu)\\\hat{\sigma}(u)\neq\hat{\sigma}(d(u))}}\frac{1-t}{1-q^{l(u)+1}t^{a(u)+1}},
    \end{equation*}
    where $x^\sigma=\prod_{u\in\dg'(\mu)}x_{\sigma(u)}.$
\end{theorem}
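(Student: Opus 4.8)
The plan is to argue by induction on the length $\ell(\pi)$, taking the Haglund--Haiman--Loehr formula---the case $\pi=\mathrm{id}$, in which the basement correction term in $\inv(\hat\sigma)$ collapses to the standard $|\{i<j:\mu_i\le\mu_j\}|$---as the base case \cite{HHL}. For the inductive step I would fix a simple reflection $\sigma_i$ with $\ell(\sigma_i\pi)=\ell(\pi)+1$, so that $T_{\sigma_i\pi}=T_iT_\pi$ and hence, by the definition \eqref{pbE},
\begin{equation*}
E_\mu^{\sigma_i\pi}=t^{\ell_\mu(\pi)-\ell_\mu(\sigma_i\pi)}\,T_iE_\mu^\pi .
\end{equation*}
It then suffices to check that applying $T_i$ to the combinatorial formula for $E_\mu^\pi$, which is available by the inductive hypothesis, reproduces up to the scalar $t^{\ell_\mu(\sigma_i\pi)-\ell_\mu(\pi)}$ the combinatorial formula attached to the basement $\sigma_i\pi$.

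Combinatorially, passing from the basement $\pi$ to $\sigma_i\pi$ interchanges the values $i$ and $i+1$ wherever they occur in row $0$, and at the level of monomials $T_i$ relates fillings whose entries $i$ and $i+1$ have been swapped throughout. The core of the argument is therefore a weight-preserving analysis of the map on non-attacking fillings induced by $T_i$. First I would compute $\ell_\mu(\sigma_i\pi)-\ell_\mu(\pi)$ explicitly in terms of the relative order of the values $i$ and $i+1$ in the one-line notation of $\pi$ and the corresponding parts of $\mu$; the difference is $0$ or $\pm 1$ according to whether $i$ precedes $i+1$ and whether the associated parts satisfy the relation $\le$. I would then track, cell by cell, how $\maj$, $\coinv$ (equivalently $\inv$, including its basement correction term), and the product $\prod(1-t)/(1-q^{l(u)+1}t^{a(u)+1})$ change when the basement is altered, using the explicit form $T_if=t\,\sigma_i(f)+(t-1)x_{i+1}\frac{\sigma_i(f)-f}{x_i-x_{i+1}}$ applied to each $x^\sigma$.

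The main obstacle---and the reason this is genuinely combinatorial rather than a relabeling---is that the basement is not inert: the two row-$0$ cells in columns $i$ and $i+1$ attack each other and each interacts with cells of row $1$, so a naive swap of basement values can violate the non-attacking condition or flip descents and ascents in the bottom row of $\dg'(\mu)$. The heart of the proof is thus to pin down the precise correspondence between $\naf_\mu^\pi$ and $\naf_\mu^{\sigma_i\pi}$ compatible with $T_i$: according to the local configuration of the entries equal to $i$ and $i+1$ relative to the swapped basement cells, $T_i$ either acts as $t$ times a basement relabeling (when the local data is already sorted) or yields a genuine two-term contribution whose difference quotient $\frac{\sigma_i(f)-f}{x_i-x_{i+1}}$ balances the $\frac{1-t}{1-q^{l(u)+1}t^{a(u)+1}}$ factors against the changes in $\maj$ and $a(u)$. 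Verifying that these local contributions assemble into exactly the generating function over $\naf_\mu^{\sigma_i\pi}$, with the inversion correction term and the scalar $t^{\ell_\mu(\sigma_i\pi)-\ell_\mu(\pi)}$ accounted for, is where essentially all the work lies; in the basement-free case this recovers the interpolation argument of \cite{HHL}, and the general case is the extension carried out in \cite{A}.
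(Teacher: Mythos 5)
Your strategy---induction on $\ell(\pi)$ with \cite{HHL} as the base case and a step governed by $E_\mu^{\sigma_i\pi}=t^{\ell_\mu(\pi)-\ell_\mu(\sigma_i\pi)}T_iE_\mu^\pi$---is exactly the route the paper has in mind: the paper does not prove Theorem~\ref{comb form thm} at all, but cites \cite{A,HHL} and indicates in Remark~\ref{Remark on A} that the identification $E_\mu^\pi=A_{\omega_0(\mu)}^{\pi\omega_0}$ with Alexandersson's polynomials is obtained by precisely this induction via \cite[Prop.~15]{A}. So the skeleton of your argument is sound, and the reduction $T_{\sigma_i\pi}=T_iT_\pi$ together with the computation that $\ell_\mu(\sigma_i\pi)-\ell_\mu(\pi)\in\{0,1\}$ (note: in the length-increasing direction it cannot be $-1$) is correct.

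The gap is that, as a self-contained proof, the inductive step is only described, not executed, and that step \emph{is} the theorem. Everything hinges on showing that applying $T_if=t\,\sigma_i(f)+(t-1)x_{i+1}\frac{\sigma_i(f)-f}{x_i-x_{i+1}}$ to the sum over $\naf_\mu^\pi$ produces $t^{\ell_\mu(\sigma_i\pi)-\ell_\mu(\pi)}$ times the sum over $\naf_\mu^{\sigma_i\pi}$; this requires pairing fillings related by swapping the entries $i$ and $i+1$, checking how $\maj$, $\coinv$ (including the basement correction term $|\{i<j:\mu_i\le\mu_j,\ \hsigma(i,0)<\hsigma(j,0)\}|$ specific to this paper's definition of $\inv$), and the arm/leg product transform under the pairing, and verifying the two-term cancellation in the unsorted case. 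You acknowledge this is ``where essentially all the work lies'' and defer it to \cite{A}; that is acceptable if your intent is to reduce to the literature (which is all the paper itself does), but it is not a proof of the combinatorial identity. One further small inaccuracy: left multiplication by $\sigma_i$ swaps the basement \emph{values} $i$ and $i+1$, which sit in columns $\pi^{-1}(i)$ and $\pi^{-1}(i+1)$, not in columns $i$ and $i+1$ as you state; since all row-$0$ cells attack one another this does not derail the argument, but the columns whose interaction with row $1$ must be tracked are $\pi^{-1}(i)$ and $\pi^{-1}(i+1)$.
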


\begin{remark}\label{Remark on A}
Denoting by $A^\pi_\mu$ the permuted basement non-symmetric Macdonald polynomials as defined in \cite{A}, one has the precise relationship $E^\pi_\mu=A^{\pi\omega_0}_{\omega_0(\mu)}$, where $\omega_0=\omega_0^{[n]}\in S_n$. This can be shown by induction on $\ell(\pi)$ using \cite[Prop. 15]{A}. We note that, in the expression $A^{\pi\omega_0}_{\omega_0(\mu)}$, $\pi\omega_0$ denotes group operation of composition on $S_n$ and $\omega_0(\mu)=(\mu_n,\dotsc,\mu_1)$.  
\end{remark}

It is useful to reformulate $\inv(\hat{\sigma})$ and $\coinv(\hat{\sigma})$ as the number of \textit{inversion triples} and \textit{coinversion triples}, respectively. We say that a \textit{triple} consists of three boxes $u,v,w\in\hat{\dg}(\mu)$ such that $$w=d(u)\text{ and }v\in\text{arm}(u).$$

With this description, we can characterize triples pictorially. Figure \ref{triple picture} depicts Type I and Type II triples, assuming that the column to which $u$ belongs is strictly taller than the column to which $v$ belongs in the Type I case and weakly taller in the Type II case.

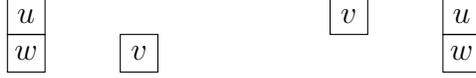
\begin{figure}[h]
\centering
\begin{tikzpicture}[scale=.5]
  \begin{scope}
    \draw (0,1) rectangle ++(1,-1);
        \node at (0+0.5,0.5) {$w$};
        \draw (3,1) rectangle ++(1,-1);
        \node at (3+0.5,0.5) {$v$};
        \draw (0,2) rectangle ++(1,-1);
        \node at (0+0.5,1.5) {$u$};
  \end{scope}
\end{tikzpicture}
\hspace{2cm}
\begin{tikzpicture}[scale=.5]
  \begin{scope}
    \draw (0,2) rectangle ++(1,-1);
        \node at (0+0.5,1.5) {$v$};
        \draw (3,1) rectangle ++(1,-1);
        \node at (3+0.5,0.5) {$w$};
        \draw (3,2) rectangle ++(1,-1);
        \node at (3+0.5,1.5) {$u$};
  \end{scope}
\end{tikzpicture}
\caption{Type I (left) and Type II (right) triples.}
\label{triple picture}
\end{figure}

Given a filling $\hat{\sigma}$ and boxes $x,y$ in $\hat{\dg}(\mu)$, with $x<y$ in the reading order, set
\begin{equation*}
    \chi_{xy}(\hsigma)=\begin{cases}
        1 & \text{if } \hat{\sigma}(x)>\hsigma(y),\\
        0 & \text{otherwise}.
    \end{cases}
\end{equation*}
If $(u,v,w)$ is a triple, then $v$ attacks $u$ and $w$, so we have $\chi_{uv}(\hsigma)=1$ if and only if $(u,v)\in\inv(\hsigma)$. Similarly, we have $\chi_{vw}(\hsigma)=1$ if and only if $(v,w)\in\inv(\hsigma)$, and we have $\chi_{uw}(\hsigma)=1$ if and only if $u\in\Des(\hsigma)$. It then follows that 
\begin{equation*}
    \chi_{uv}(\hsigma)+\chi_{vw}(\hsigma)-\chi_{uw}(\hsigma)\in\{0,1\}.
\end{equation*} With that said, we say that a triple $(u,v,w)$ is an \textit{inversion triple} of $\hsigma$ if $\chi_{uv}(\hsigma)+\chi_{vw}(\hsigma)-\chi_{uw}(\hsigma)=1$, and we say that $(u,v,w)$ is a \textit{coinversion triple} otherwise. We have the following useful lemma regarding triples.
\begin{lemma}[{\cite[Lemma 3.6.1]{HHL}}]
\label{attacking boxes}
    Every pair of attacking boxes in $\hat{\dg}(\mu)$ occurs as either $\{u,v\}$ or $\{v,w\}$ in a unique triple $(u,v,w)$, with the exception that an attacking pair $\{(i,0), (i',0)\}$ in row $0$ such that $i<i'$ and $\mu_i\leq\mu_{i'}$ does not belong to any triple.
\end{lemma}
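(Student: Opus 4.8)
The plan is to prove the lemma by directly matching attacking pairs with triples, organized according to the two ways boxes can attack. The first step is the forward direction: for any triple $(u,v,w)$ with $u=(i,j)\in\dg'(\mu)$, $w=d(u)=(i,j-1)$, and $v\in\text{arm}(u)$, I would check that both $\{u,v\}$ and $\{v,w\}$ are attacking pairs of complementary shapes. If $v=(i',j)\in\text{arm}^{\text{left}}(u)$ (so $i'<i$), then $\{u,v\}$ is a same-row pair while $\{v,w\}=\{(i',j),(i,j-1)\}$ is a consecutive-row pair with the lower box to the right; if instead $v=(i',j-1)\in\text{arm}^{\text{right}}(u)$ (so $i'>i$), the roles of the two shapes are reversed. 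Thus every triple produces exactly one same-row attacking pair and one consecutive-row attacking pair.

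For the converse, I would take an arbitrary attacking pair and reconstruct the triple(s) containing it. For a same-row pair $\{(a,r),(b,r)\}$ with $a<b$ and $r\ge 1$, the $\{u,v\}$-role forces $u=(b,r)$ and $v=(a,r)\in\text{arm}^{\text{left}}(u)$, which requires $\mu_a\le\mu_b$, while the $\{v,w\}$-role forces $w=(a,r)$, $v=(b,r)$, $u=(a,r+1)$, and $v\in\text{arm}^{\text{right}}(u)$, which requires $\mu_b<\mu_a$. For a consecutive-row pair $\{(c,r),(d,r-1)\}$ with $c<d$ (upper box at column $c$), the $\{u,v\}$-role gives $u=(c,r)$ and $v=(d,r-1)\in\text{arm}^{\text{right}}(u)$, requiring $\mu_d<\mu_c$, whereas the $\{v,w\}$-role gives $u=(d,r)$, $v=(c,r)$, $w=(d,r-1)$, requiring $\mu_c\le\mu_d$. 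In both situations the two conditions---a weak inequality coming from $\text{arm}^{\text{left}}$ and a strict inequality coming from $\text{arm}^{\text{right}}$---are exactly complementary, so precisely one role applies; since the third box of each role is then determined ($w=d(u)$, or $u$ as the box directly above $w$), the triple is unique. This complementarity of the defining inequalities of the two arms is the structural heart of the argument.

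The remaining case is a same-row pair in the basement, $\{(a,0),(b,0)\}$ with $a<b$, and I expect it to be the main subtlety. Here the $\{u,v\}$-role is simply unavailable, since it would require $u$ to sit in row $0$, whereas every $u\in\dg'(\mu)$ lies in a row $\ge 1$; only the $\{v,w\}$-role survives, with $u=(a,1)$, $w=(a,0)$, $v=(b,0)\in\text{arm}^{\text{right}}(u)$, which requires $\mu_b<\mu_a$. Hence the pair belongs to a (unique) triple when $\mu_a>\mu_b$ and to no triple when $\mu_a\le\mu_b$, which is exactly the exceptional case in the statement. The key point is that in every higher row the left-arm mechanism supplies the interpretation complementary to the right-arm one, but in the basement this mechanism is absent, and its absence is precisely what produces the weak-inequality exception. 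Assembling the three cases then yields the claim.
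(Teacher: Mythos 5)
Your argument is correct. The paper does not actually prove this lemma---it is imported verbatim from \cite[Lemma 3.6.1]{HHL}---so there is no in-paper proof to compare against; your case analysis (splitting by the two attack shapes and by which of $\text{arm}^{\text{left}}$ or $\text{arm}^{\text{right}}$ can realize the $\{u,v\}$- versus $\{v,w\}$-role, with the complementary inequalities $\mu_{i'}\le\mu_i$ and $\mu_{i'}<\mu_i$ forcing exactly one realization) is the standard argument and establishes both existence and uniqueness. The one step you leave implicit is that in the $\{v,w\}$-role the box $u$ sitting directly above $w$ must actually belong to $\dg'(\mu)$; this is harmless, since in each case the governing inequality together with the fact that both boxes of the attacking pair lie in $\hat{\dg}(\mu)$ forces the column of $w$ to be tall enough (e.g.\ for a same-row pair in row $r\ge 1$, $\mu_b<\mu_a$ and $\mu_b\ge r$ give $\mu_a\ge r+1$), and in the basement case the needed $\mu_a>\mu_b\ge 0$ likewise guarantees $\mu_a\ge 1$, so the only genuine obstruction there is the unavailability of the left-arm role, exactly as you say.
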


We now state and prove a proposition from \cite{HHL} since the proof differs slightly on account of our definition of $\inv(\hsigma)$.

\begin{proposition}[\cite{HHL}, Proposition 3.6.2]
    Let $\sigma$ be a filling of $\mu$. The number of inversion triples (respectively, coinversion triples) of $\hsigma$ is equal to $\inv(\hsigma)$ (respectively, $\coinv(\hsigma))$.
\end{proposition}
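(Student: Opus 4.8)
The plan is to compute the number of inversion triples directly from the identity $\chi_{uv}(\hsigma)+\chi_{vw}(\hsigma)-\chi_{uw}(\hsigma)\in\{0,1\}$ established above, which is exactly the indicator that $(u,v,w)$ is an inversion triple. Summing over all triples,
\begin{equation*}
\#\{\text{inversion triples}\}=\sum_{(u,v,w)}\bigl(\chi_{uv}(\hsigma)+\chi_{vw}(\hsigma)-\chi_{uw}(\hsigma)\bigr),
\end{equation*}
and I would first establish that this equals $\inv(\hsigma)$. The coinversion statement then comes for free: since each triple is exactly one of the two types and the total number of triples is $\sum_{u\in\dg'(\mu)}a(u)$ (for each $u$ there are $a(u)$ choices of $v\in\text{arm}(u)$, with $w=d(u)$ forced), the number of coinversion triples is $\sum_u a(u)-\inv(\hsigma)=\coinv(\hsigma)$ by definition.

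Next I would evaluate the three sums separately. For the $\chi_{uw}$ sum, note that $w=d(u)$ is determined by $u$ and that $\chi_{uw}(\hsigma)=1$ precisely when $u\in\Des(\hsigma)$, independent of the choice of $v$; since each $u$ occurs in exactly $a(u)$ triples, this gives $\sum_{(u,v,w)}\chi_{uw}(\hsigma)=\sum_{u\in\Des(\hsigma)}a(u)$. For the remaining two sums I would reindex over attacking pairs via Lemma~\ref{attacking boxes}: as $(u,v,w)$ ranges over all triples, the pairs $\{u,v\}$ and $\{v,w\}$ run over every attacking pair exactly once, with the sole exception of the row-$0$ pairs $\{(i,0),(i',0)\}$ satisfying $i<i'$ and $\mu_i\le\mu_{i'}$, which belong to no triple. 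Because $\chi$ of an attacking pair equals $1$ exactly when that pair lies in $\Inv(\hsigma)$, the two sums together equal $|\Inv(\hsigma)|$ minus the number of excepted pairs that happen to be inversions.

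Finally I would identify this last correction with the middle term in the definition of $\inv(\hsigma)$. For an excepted pair $\{(i,0),(i',0)\}$ with $i<i'$, the box $(i',0)$ comes before $(i,0)$ in the reading order (same row, $i'>i$), so the pair is an inversion exactly when $\hsigma(i',0)>\hsigma(i,0)$; the number of such pairs is therefore $|\{i<j:\mu_i\le\mu_j\text{ and }\hsigma(i,0)<\hsigma(j,0)\}|$. Combining the three evaluations yields
\begin{equation*}
\#\{\text{inversion triples}\}=|\Inv(\hsigma)|-\bigl|\{i<j:\mu_i\le\mu_j,\ \hsigma(i,0)<\hsigma(j,0)\}\bigr|-\sum_{u\in\Des(\hsigma)}a(u)=\inv(\hsigma),
\end{equation*}
as desired.

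The one delicate point—and the place where this argument departs from the proof in \cite{HHL}—is the bookkeeping for the excepted row-$0$ pairs. The main obstacle is to verify that the attacking pairs excluded by Lemma~\ref{attacking boxes} are precisely those governed by the basement-dependent correction term built into our definition of $\inv(\hsigma)$, and that the reading-order convention makes the inversion condition on such a pair read $\hsigma(i',0)>\hsigma(i,0)$ for $i<i'$. Once this alignment is confirmed, the rest is a routine reorganization of the sum.
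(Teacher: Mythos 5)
Your proof is correct and follows essentially the same route as the paper's: express the inversion-triple count as $\sum_{(u,v,w)}(\chi_{uv}+\chi_{vw}-\chi_{uw})$, evaluate the first two terms via Lemma~\ref{attacking boxes} as $|\Inv(\hsigma)|$ minus the excepted basement pairs, identify the third term with $\sum_{u\in\Des(\hsigma)}a(u)$, and get the coinversion count by complementation against the total $\sum_u a(u)$. Your explicit check that the excepted row-$0$ pairs are inversions exactly when $\hsigma(i,0)<\hsigma(i',0)$ (because of the right-to-left reading order) is a detail the paper states only loosely, so it is a welcome clarification rather than a departure.
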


\begin{proof}
    We claim that the number of inversion triples is given by the sum over all triples
    \begin{equation}
        \sum_{(u,v,w)}(\chi_{uv}(\hsigma)+\chi_{vw}(\hsigma)-\chi_{uw}(\hsigma)).
    \end{equation} Naively, one might assume that the contribution to the sum from the first two terms comes from $|\Inv(\hsigma)|$; however, it follows from Lemma \ref{attacking boxes} that we must subtract from $|\Inv(\hsigma)|$ the number of inversions in the basement. Hence, the contribution to the sum from the first two terms is given by $|\Inv(\hsigma)|-|\{i<j:\mu_i\leq\mu_j, \hat{\sigma}(i,0)<\hat{\sigma}(j,0) \}|$. Further, the contribution from the last term is simply given by $-\sum_{u\in\Des(\hsigma)}a(u)$, which proves that $\inv(\hsigma)$ is equal to the number of inversion triples. Further, we have exactly one triple $(u,v,w)$ for each $u\in\dg'(\mu)$ and $v\in\text{arm}(u)$. Thus, the number of triples is equal to $\sum_{u\in\dg'(\mu)}\text{arm}(u)$, so the number of coinversion triples is equal to $(\sum_{u\in\dg'(\mu)}\text{arm}(u))-\inv(\hsigma)=\coinv(\hsigma)$, and we are done.
\end{proof}

We have the following useful description for coinversion triples: 

\begin{lemma}[\cite{HHL}, Lemma 3.6.3]
\label{coinv structure}
    If $\sigma$ is a non-attacking filling, then a triple $(u,v,w)$ is a coinversion triple if and only if $\hsigma(u)<\hsigma(v)<\hsigma(v)$ or $\hsigma(v)<\hsigma(w)<\hsigma(u)$ or $\hsigma(w)<\hsigma(u)<\hsigma(v)$.
\end{lemma}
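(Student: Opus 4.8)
The plan is to reduce the statement to a finite case check on the relative order of the three entries $\hsigma(u)$, $\hsigma(v)$, $\hsigma(w)$, using the characterization of a coinversion triple as one for which $\chi_{uv}(\hsigma)+\chi_{vw}(\hsigma)-\chi_{uw}(\hsigma)=0$. In order to read off the three indicator functions directly from the inequalities among the entries, the first thing I need is to pin down the reading-order relations among $u$, $v$, and $w$.

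First I would record that in every triple one has $u<v<w$ in the reading order. Writing $u=(i,j)$ and $w=d(u)=(i,j-1)$, there are two cases according to Figure \ref{triple picture}. In a Type II triple $v=(i',j)\in\text{arm}^{\text{left}}(u)$ lies in the same row as $u$ but to its left ($i'<i$), so $u<v$ by the right-to-left rule, while $w$ sits in the lower row $j-1$, giving $v<w$. In a Type I triple $v=(i',j-1)\in\text{arm}^{\text{right}}(u)$ lies in the same row as $w$ but to its right ($i'>i$), so $v<w$, while $u$ sits in the higher row $j$, giving $u<v$. Hence in both cases $u<v<w$, and consequently $\chi_{uv}(\hsigma)$, $\chi_{vw}(\hsigma)$, $\chi_{uw}(\hsigma)$ are the indicators of $\hsigma(u)>\hsigma(v)$, $\hsigma(v)>\hsigma(w)$, and $\hsigma(u)>\hsigma(w)$, respectively, each oriented exactly as written. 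I would also note that in either type $v$ attacks both $u$ and $w$, so the non-attacking hypothesis forces $\hsigma(v)\neq\hsigma(u)$ and $\hsigma(v)\neq\hsigma(w)$.

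With these orientations fixed, the proof becomes a direct enumeration. Setting $a=\hsigma(u)$, $b=\hsigma(v)$, $c=\hsigma(w)$ with $b\notin\{a,c\}$, I would tabulate the value of $\chi_{uv}+\chi_{vw}-\chi_{uw}$ over the possible orderings. Among the six strict orderings of $a,b,c$, exactly the three cyclic chains $a<b<c$, $b<c<a$, and $c<a<b$ yield the value $0$, i.e.\ a coinversion triple; these are precisely the three conditions in the statement (the first of which should read $\hsigma(u)<\hsigma(v)<\hsigma(w)$). The remaining three orderings, together with the boundary case $a=c$ (which gives value $[a>b]+[b>a]-0=1$), all yield the value $1$ and hence inversion triples, consistent with the lemma since none of the three listed chains can hold once $\hsigma(u)=\hsigma(w)$.

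The only genuinely delicate step is the reading-order verification of the second paragraph: one must correctly combine the top-to-bottom ordering between rows with the right-to-left ordering within a row in each of the two triple types, and confirm that all three indicators come out positively oriented. Everything after that is the routine six-line case check, so I expect no substantive obstacle.
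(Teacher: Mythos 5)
Your proof is correct and takes the same route as the source: the paper does not reprove this lemma but imports it from \cite{HHL}, where the argument is exactly your verification that $u<v<w$ in reading order (so that $\chi_{uv}$, $\chi_{vw}$, $\chi_{uw}$ are positively oriented) followed by the six-case check of $\chi_{uv}+\chi_{vw}-\chi_{uw}$ together with the boundary case $\hsigma(u)=\hsigma(w)$. You also rightly flag the typo in the statement (the first chain should read $\hsigma(u)<\hsigma(v)<\hsigma(w)$), and your reading-order orientation is the one consistent with the paper's prose description and with its claim that $\chi_{uw}(\hsigma)=1$ iff $u\in\Des(\hsigma)$, even though the paper's ``in symbols'' formula for the reading order has the row inequality reversed.
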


In other words, a triple is a coinversion triple if and only if its entries increase clockwise in Type I or counterclockwise in Type II. Knowing this will be helpful in our later proofs. In \cite{HHL} the authors recast their combinatorial formula as a formula for what they call the ``opposite'' Macdonald polynomials $E_\mu(x; q^{-1}, t^{-1})$. They define
\begin{align*}
    \maj'(\hsigma)&=(\sum_{\substack{u\in\dg'(\mu)\\\hsigma(u)\neq\hsigma (d(u))}}(l(u)+1))-\maj(\hsigma)=\sum_{u\in\Asc(\hsigma)}(l(u)+1),\\
    \coinv'(\hsigma)&=(\sum_{\substack{u\in\dg'(\mu)\\\hsigma(u)\neq\hsigma (d(u))}}(a(u)))-\coinv(\hsigma),
\end{align*}
where it is helpful to realize that $\maj'(\hsigma)=\maj(\hsigma')$ and $\coinv'(\hsigma)=\coinv(\hsigma')$, where $\hsigma'(u)=n+1-\hsigma(u)$. In the following section, this realization will prove to be especially useful. We end this section with the following corollary of Theorem \ref{comb form thm}.

\begin{corollary}[\cite{HHL}, Corollary 3.6.4]\label{hhl corollary}
    For all $\mu\in (\mathbb{Z}_{\ge 0})^n$,
        \begin{equation*}
            E_\mu(x;q^{-1},t^{-1})=\sum_{\sigma\in\naf_\mu^\pi}x^\sigma q^{\maj'(\hat{\sigma})}t^{\coinv'(\hat{\sigma})}\prod_{\substack{u\in\dg'(\mu)\\\hat{\sigma}(u)\neq\hat{\sigma}(d(u))}}\frac{1-t}{1-q^{l(u)+1}t^{a(u)+1}}.
        \end{equation*}
\end{corollary}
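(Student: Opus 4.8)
The goal is to transform the formula in Theorem~\ref{comb form thm} (with $\pi$ the identity, so $E_\mu = E_\mu^{\mathrm{id}}$) into the claimed expression for $E_\mu(x;q^{-1},t^{-1})$. The plan is to start from
\[
E_\mu(x;q,t)=\sum_{\sigma\in\naf_\mu}x^\sigma q^{\maj(\sigma)}t^{\coinv(\sigma)}\prod_{\substack{u\in\dg'(\mu)\\\hsigma(u)\neq\hsigma(d(u))}}\frac{1-t}{1-q^{l(u)+1}t^{a(u)+1}},
\]
substitute $q\mapsto q^{-1}$, $t\mapsto t^{-1}$, and show that after clearing negative powers the right-hand side becomes the asserted sum. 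The exponents $l(u)$ and $a(u)$ depend only on the shape $\mu$, not on the filling, so they are inert under the substitution; the only filling-dependent data are $\maj(\sigma)$, $\coinv(\sigma)$, and the set of non-descent boxes appearing in the product.

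First I would handle the product over boxes $u$ with $\hsigma(u)\neq\hsigma(d(u))$. Note this index set is exactly $\Des(\hsigma)\cup\Asc(\hsigma)$ and is unchanged by the substitution, since the condition $\hsigma(u)\neq\hsigma(d(u))$ does not reference $q$ or $t$. Each factor becomes $\frac{1-t^{-1}}{1-q^{-(l(u)+1)}t^{-(a(u)+1)}}$; I would multiply numerator and denominator of each factor by $-t$ and by $-q^{l(u)+1}t^{a(u)+1}$ respectively to rewrite it as $\frac{(1-t)q^{l(u)+1}t^{a(u)+1}\cdot t^{-1}}{1-q^{l(u)+1}t^{a(u)+1}} = \frac{1-t}{1-q^{l(u)+1}t^{a(u)+1}}\cdot q^{l(u)+1}t^{a(u)}$ after simplification. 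Collecting the accumulated monomial factors over all $u$ in the product index set yields a global factor of $q^{\sum (l(u)+1)}\, t^{\sum a(u)}$, where both sums range over $u\in\dg'(\mu)$ with $\hsigma(u)\neq\hsigma(d(u))$. This is precisely where the definitions of $\maj'$ and $\coinv'$ enter.

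The key step is then to combine these accumulated factors with $q^{-\maj(\sigma)}t^{-\coinv(\sigma)}$ coming from the substitution in the explicit powers. By the definition of $\maj'$, we have $\sum_{u:\,\hsigma(u)\neq\hsigma(d(u))}(l(u)+1) - \maj(\sigma) = \maj'(\sigma)$, so the net power of $q$ is $\maj'(\sigma)$. Likewise, by the definition of $\coinv'$, we have $\sum_{u:\,\hsigma(u)\neq\hsigma(d(u))}a(u) - \coinv(\sigma) = \coinv'(\sigma)$, giving net $t$-power $\coinv'(\sigma)$. Since the remaining product and the monomial $x^\sigma$ are untouched, assembling these gives exactly the formula in Corollary~\ref{hhl corollary}. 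The main obstacle—really the only delicate point—is bookkeeping the signs and exponents correctly when rationalizing each factor of the product; once that algebra is arranged so that the surviving monomial factor is $q^{l(u)+1}t^{a(u)}$ per box, the identification with $\maj'$ and $\coinv'$ is immediate from their definitions. (One should also note $\pi$ is the identity here, so $\naf_\mu^\pi = \naf_\mu$, matching the statement.)
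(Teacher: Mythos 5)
Your proof is correct and is essentially the standard derivation (the paper simply cites \cite{HHL}, Corollary 3.6.4, whose proof is exactly this substitution-and-rationalization argument): each factor $\frac{1-t^{-1}}{1-q^{-(l(u)+1)}t^{-(a(u)+1)}}$ indeed simplifies to $q^{l(u)+1}t^{a(u)}\cdot\frac{1-t}{1-q^{l(u)+1}t^{a(u)+1}}$, and the accumulated monomials convert $\maj$ and $\coinv$ into $\maj'$ and $\coinv'$ by their definitions. Your closing remark is also apt: since nothing in the computation depends on the basement, the same argument gives the identity for $E_\mu^\pi(x;q^{-1},t^{-1})$ with arbitrary $\pi$, which is the form implicitly used later in the paper.
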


Theorem \ref{complement thm} below is our main result regarding the permuted basement Macdonald polynomials, which we ultimately use to prove the Concha-Lapointe identity stated in Theorem~\ref{CL theorem}. In the statements of both Corollary~\ref{hhl corollary} and Theorem~\ref{complement thm}, notice the variable substitutions $q\mapsto q^{-1}$ and $t\mapsto t^{-1}$. Corollary \ref{hhl corollary} provides us with a combinatorial interpretation of how these substitutions interact with the non-attacking fillings. Specifically, the substitution $q\mapsto q^{-1}$ indicates that the variable $q$ now tracks the statistic $\maj'(\hsigma)$, and the substitution $t\mapsto t^{-1}$ indicates that the variable $t$ now tracks the statistic $\coinv'(\hsigma)$.

\subsection{Permuted basement identity}

As noted in \cite{A}, Corollary \ref{hhl corollary} provides an interesting relationship between non-attacking fillings of a diagram with the identity permutation in the basement, and non-attacking fillings of a diagram with the reversed identity permutation in the basement. The following work generalizes this result, and provides a similar relationship between non-attacking fillings with complementary basements.

For a permutation $\pi\in S_n$, we write $\pi$ in the usual one-line notation $\pi=\pi(1)\pi(2)\dotsm\pi(n)$. Its complement, denoted $\pi^c$, is given by $\pi^c=(n+1-\pi(1))(n+1-\pi(2))\dotsm(n+1-\pi(n))$. For example, if $\pi=1432$, then $\pi^c=4123$. We also consider those permutations that can be written as the concatenation of two permutations of disjoint sets of consecutive integers. For example, the permutation $\pi=132546$ can be thought of as the concatenation of $\pi_1=132$ and $\pi_2=546$. For our purposes, we consider the complements of $\pi_1$ and $\pi_2$ with respect to the sets on which they are permutations. That is, if $A=[a,b]=\{a,a+1,\dotsc,b-1,b\}$ is a finite set of consecutive positive integers, where $1\leq a\leq b$, then for any $\pi\in S_A$, we define $$\pi^c(i)=b+a-\pi(i), \quad\text{for all $i\in [a,b]$}.$$ (In case $a>b$, we define $\pi^c$ to be the identity element, i.e., the empty word in one-line notation.) Then with this definition of the complement, we see that for $\pi_1=132$ and $\pi_2=546$, we have $\pi_1^c=312$ and $\pi_2^c=564$. We now state and work towards proving the following, which is our main result and the promised refinement of the Concha-Lapointe identity (cf. \S\ref{CL section}).

\begin{theorem}
\label{complement thm}
    Suppose $\pi_1\in S_m$ and $\pi_2 \in S_{[m+1,n]}$ for some $n\in\N$ and integer $m$ such that $0\le m \le n$. Let $\pi=\pi_1\pi_2$ be the concatenation of these permutations. Let $\mu\in \mathbb{N}^n$ and set $p=\sum_{m+1\leq i\leq n}\mu_i$. Then
    \begin{equation}
    \label{generalized equation}
        E_\mu^{\pi_1^c\pi_2^c}(x_1,\dotsc,x_n;q,t)=q^{-p}E_\mu^{\pi_1\pi_2}(x_m,\dotsc,x_1,qx_n,\dotsc,qx_{m+1};q^{-1},t^{-1}).
    \end{equation}
\end{theorem}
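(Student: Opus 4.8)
The plan is to compare the two sides term by term using the combinatorial formula of Theorem~\ref{comb form thm} for the left-hand side and its opposite version (Corollary~\ref{hhl corollary}) for the right-hand side. Write $y=(y_1,\dotsc,y_n)$ for the substituted alphabet on the right, so that $y_i=x_{m+1-i}$ for $i\le m$ and $y_i=q\,x_{m+n+1-i}$ for $i>m$. Let $W\in S_n$ be the block reversal with $W(v)=m+1-v$ for $v\in[m]$ and $W(v)=m+n+1-v$ for $v\in[m+1,n]$; thus $W$ is order-reversing on each of $[m]$ and $[m+1,n]$, and $\pi_1^c=W\circ\pi_1$, $\pi_2^c=W\circ\pi_2$. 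Given $\hsigma\in\naf_\mu^{\pi_1\pi_2}$ I would form $\hat\tau$ by complementing all \emph{values} through $W$, i.e.\ $\hat\tau(u)=W(\hsigma(u))$. Since $W$ is a bijection of $[n]$, the map $\hsigma\mapsto\hat\tau$ preserves the non-attacking condition (which only constrains equality of values at attacking boxes) and sends the basement $\pi_1\pi_2$ to $W\circ(\pi_1\pi_2)=\pi_1^c\pi_2^c$; hence it is a bijection $\naf_\mu^{\pi_1\pi_2}\xrightarrow{\sim}\naf_\mu^{\pi_1^c\pi_2^c}$. It therefore suffices to show that, for each $\hsigma$ with image $\hat\tau$, the corresponding summands agree.

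For the monomials, $y^\sigma=\prod_u y_{\hsigma(u)}=q^{N(\hsigma)}x^{\tau}$, where $N(\hsigma)=|\{u\in\dg'(\mu):\hsigma(u)>m\}|$ counts boxes carrying a ``large'' value: applying $W$ to indices turns $x^\sigma$ into $x^\tau$ and each large value contributes one factor of $q$. The two hook products coincide, since they range over $\{u:\hsigma(u)\neq\hsigma(d(u))\}=\{u:\hat\tau(u)\neq\hat\tau(d(u))\}$ (equality of values being preserved by $W$) and involve only $l(u),a(u)$, which depend on $\mu$ alone. After cancelling $x^\tau$ and the products, and accounting for the global $q^{-p}$ together with the $q^{N(\hsigma)}$ from the monomial, the identity reduces to the two scalar equalities $\coinv(\hat\tau)=\coinv'(\hsigma)$ (powers of $t$) and $\maj(\hat\tau)=\maj'(\hsigma)+N(\hsigma)-p$ (powers of $q$).

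For the $t$-statistic I would compare $\hat\tau=W\circ\hsigma$ with the full complement $\hsigma'$, $\hsigma'(u)=n+1-\hsigma(u)$, for which $\coinv'(\hsigma)=\coinv(\hsigma')$ is already recorded. Using the triple description (Lemma~\ref{coinv structure}), each triple $(u,v,w)$ is classified as inversion or coinversion by the cyclic order of its three distinct values, and relabelling values by a bijection $g$ flips this classification precisely when $g$ reverses an odd number of the three pairwise comparisons. The full reversal flips all three, hence flips every triple; the block reversal $W$ flips exactly the comparisons between same-block values, and among any three values the number of same-block pairs is $\binom{k}{2}+\binom{3-k}{2}\in\{1,3\}$, which is \emph{always odd}. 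Thus $W$ flips every triple as well (the degenerate triples with $\hsigma(u)=\hsigma(d(u))$ remaining inversions under both maps), so $\hat\tau$ and $\hsigma'$ have identical coinversion triples and $\coinv(\hat\tau)=\coinv(\hsigma')=\coinv'(\hsigma)$.

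The remaining $\maj$ identity is the crux. Here I would compute $\maj(\hat\tau)-\maj'(\hsigma)$ box by box: for $u$ with $a=\hsigma(u)$ above $b=\hsigma(d(u))$, the net contribution $(l(u)+1)\big([W(a)>W(b)]-[a<b]\big)$ vanishes when $a,b$ lie in the same block, equals $+(l(u)+1)$ when $u$ is ``large over small'', and $-(l(u)+1)$ when ``small over large''. Summing this telescoping contribution within each column (where $l(u)+1=\mu_i-j+1$) by Abel summation collapses to $(\text{number of large values in column }i)-\mu_i[i>m]$, with the basement value $\pi(i)$ supplying the boundary term and being large exactly when $i>m$ by the block form of $\pi$. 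Summing over columns yields $N(\hsigma)-\sum_{i>m}\mu_i=N(\hsigma)-p$, as required. I expect this summation-by-parts step, together with the realization that the \emph{non-constant} quantity $N(\hsigma)$ is exactly what reconciles the variable shift $q\,x_{m+1},\dotsc,q\,x_n$ against $\maj$ versus $\maj'$, to be the main obstacle; the block structure of the basement is used precisely to evaluate the boundary term, which explains why the hypothesis $\pi=\pi_1\pi_2$ is needed.
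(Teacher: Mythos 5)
Your proposal is correct and follows essentially the same route as the paper: your value-complementation map $\hsigma\mapsto W\circ\hsigma$ is exactly the paper's bijection $f$, and your three reductions (the monomial contributing $q^{N(\hsigma)}$, the identity $\coinv(\hat\tau)=\coinv'(\hsigma)$, and $\maj(\hat\tau)=\maj'(\hsigma)+N(\hsigma)-p$) are precisely the paper's Lemmas. The only differences are presentational --- your parity count of same-block pairs replaces the paper's case analysis for the coinversion triples, and your Abel summation down each column is a cleaner formalization of the paper's ``walk down the column, adding and subtracting'' argument --- but the substance is identical.
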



Before proving Theorem \ref{complement thm}, we prove three lemmas to help us understand the underlying combinatorics behind the stated identity. We provide a pictorial example demonstrating the proof of each lemma.

\begin{lemma}
    Let $\mu\in\N^n$ and let $\pi_1\in S_m$ and $\pi_2\in S_{[m+1,n]}$ for some $n\in \mathbb{N}$ and integer $m$ such that $0\le m\le n$. Then
    \begin{equation*}
        |\naf_\mu^{\pi_1\pi_2}|=|\naf_\mu^{\pi_1^c\pi_2^c}|.
    \end{equation*}
\end{lemma}

\begin{proof}
    There exists a natural function $f:\naf_\mu^{\pi_1\pi_2}\rightarrow\naf_\mu^{\pi_1^c\pi_2^c}$ such that for any $\hsigma\in\naf_\mu^{\pi_1\pi_2}$, $f(\hsigma)$ is the filling defined on each box of $\dg'(\mu)$ by $$f(\hsigma(u))=\begin{cases}\pi_1^c\circ\pi_1^{-1}\circ\hsigma(u) & \text{if } \hsigma(u)\leq m,\\
    \pi_2^c\circ\pi_2^{-1}\circ\hsigma(u)&\text{if } \hsigma(u)\geq m+1,\end{cases}$$ for each $u\in\dg'(\mu)$. Indeed, $f$ is well-defined. If we take a box $(i,0)$ in the basement of $\hsigma$ such that $\hsigma((i,0))\leq m$, we must have $i\in[m]$. Then we have
    \begin{align*}
        f(\hsigma((i,0)))&=\pi_1^c\circ\pi_1^{-1}\circ\hsigma((i,0))\\
        &=\pi_1^c\circ\pi_1^{-1}\circ\pi(i)\\
        &=\pi_1^c(i).
    \end{align*} Similarly, by taking a box $(i,0)$ in the basement of $\hsigma$ such that $\hsigma((i,0))>m$, we must have $i\in [m+1,n]$. Then we have
    \begin{align*}
        f(\hsigma((i,0)))&=\pi_2^c\circ\pi_2^{-1}\circ\hsigma((i,0))\\
        &=\pi_2^c\circ\pi_2^{-1}\circ\pi(i)\\
        &=\pi_2^c(i),
    \end{align*}
    and for any two boxes $u,v\in \dg'(\mu)$, we have $\hsigma(u)=\hsigma(v)$ if and only if $f(\hsigma(u))=f(\hsigma(v))$, and since $\hsigma$ is non-attacking, $f(\hsigma)$ is also non-attacking. Hence, $f(\hsigma)\in\naf_\mu^{\pi_1^c\pi_2^c}.$ This function is clearly one-to-one and onto, so $f$ is a bijection, which proves the statement.
\end{proof}

\begin{example}
    The diagram below depicts the augmented filling $\hsigma$ of a non-attacking filling $\sigma$ of $\mu=(3,2,2,1,2,3)$ and the corresponding filling $f(\hsigma)$.
\begin{center}
\begin{tikzpicture}[scale=0.5]
  \begin{scope}
     \node at (-12.5,2) {$\hsigma=$};
    \foreach \x/\val in {-11/1,-10/3,-9/2,-8/5,-7/4,-6/6} {
      \draw (\x,1) rectangle ++(1,-1);
        \node at (\x+0.5,0.5) {\val};
    }
    \foreach \x/\val in {-11/1,-10/3,-9/2,-8/5,-7/4,-6/6} {
      \draw (\x,2) rectangle ++(1,-1);
        \node at (\x+0.5,1.5) {\val};
    }
    \foreach \x/\val in {-11/1,-10/3,-9/2,-7/5,-6/6} {
      \draw (\x,3) rectangle ++(1,-1);
        \node at (\x+0.5,2.5) {\val};
    }
    \foreach \x/\val in {-11/4,-6/3} {
      \draw (\x,4) rectangle ++(1,-1);
        \node at (\x+0.5,3.5) {\val};
    }
    \node at (-4.5,2) {,};
  
    \foreach \x/\val in {0/3,1/1,2/2,3/5,4/6,5/4} {
      \draw (\x,1) rectangle ++(1,-1);
        \node at (\x+0.5,0.5) {\val};
    }
    \foreach \x/\val in {0/3,1/1,2/2,3/5,4/6,5/4} {
      \draw (\x,2) rectangle ++(1,-1);
        \node at (\x+0.5,1.5) {\val};
    }
    \foreach \x/\val in {0/3,1/1,2/2,4/5,5/4} {
      \draw (\x,3) rectangle ++(1,-1);
        \node at (\x+0.5,2.5) {\val};
    }
    \foreach \x/\val in {0/6,5/1} {
      \draw (\x,4) rectangle ++(1,-1);
        \node at (\x+0.5,3.5) {\val};
    }
    \node at (-2,2) {$f(\hsigma)=$};
    \draw[black, ultra thick] (-11,1) -- (-5,1);
    \draw[black, ultra thick] (0,1) -- (6,1);
  \end{scope}
\end{tikzpicture}
\end{center}
Notice that $\hsigma\in\naf_\mu^{132546}$ and $f(\hsigma)\in\naf_\mu^{312564}.$
\end{example}

\begin{lemma}
\label{maj lemma}
    Suppose $\pi_1\in S_m$ and $\pi_2 \in S_{[m+1,n]}$ for some $n\in\N$ and integer $m$ such that $0\le m \le n$. For a given composition $\mu\in\N^n$, let $\hsigma\in\naf_\mu^{\pi_1\pi_2}$ be arbitrary. Further, let $L=|(f( \hsigma))^{-1}([m+1,n])|$, and let $p=\sum_{m+1\leq i\leq n}\mu_i$. Then
    \begin{equation*}
        \label{maj,maj'}\maj(f(\hsigma))=\maj'(\sigma)-p+L.
    \end{equation*}
\end{lemma}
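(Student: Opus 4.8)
The plan is to make the action of $f$ on fillings completely explicit and then reduce the claim to a telescoping identity carried out one column at a time. First I would unwind the definition of $f$ from the previous lemma: writing $\pi_1^c(j)=m+1-\pi_1(j)$ and $\pi_2^c(j)=m+n+1-\pi_2(j)$, the composites collapse to $\pi_1^c\circ\pi_1^{-1}(v)=m+1-v$ for $v\le m$ and $\pi_2^c\circ\pi_2^{-1}(v)=m+n+1-v$ for $v\ge m+1$. Thus $f$ acts on entry values as \emph{block-wise complementation}: it sends $v\mapsto m+1-v$ on $[1,m]$ and $v\mapsto m+n+1-v$ on $[m+1,n]$, preserving each block. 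In particular $f(\hsigma)(u)\ge m+1$ if and only if $\hsigma(u)\ge m+1$, so $L$ counts exactly the boxes $u\in\dg'(\mu)$ with $\hsigma(u)\ge m+1$ (this also confirms that $L$ is to be read as a count of non-basement boxes, which is forced by the fact that $\maj$ and $\maj'$ see only $\dg'(\mu)$).

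Next I would write the target difference as a single signed sum over $\dg'(\mu)$. Since $\maj(f(\hsigma))=\sum_{u\in\Des(f(\hsigma))}(l(u)+1)$ and $\maj'(\hsigma)=\sum_{u\in\Asc(\hsigma)}(l(u)+1)$, we have
\[
\maj(f(\hsigma))-\maj'(\hsigma)=\sum_{u\in\dg'(\mu)}\bigl(\mathbf{1}[u\in\Des(f(\hsigma))]-\mathbf{1}[u\in\Asc(\hsigma)]\bigr)(l(u)+1).
\]
The key computation is that at $u=(i,j)$ this indicator difference depends only on which block each of $\hsigma(u)$ and $\hsigma(d(u))$ lies in, so I would check four cases according to whether each entry is $\le m$ or $\ge m+1$. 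When both lie in the \emph{same} block, block-wise complementation reverses their order, so $u$ is a descent of $f(\hsigma)$ precisely when it is an ascent of $\hsigma$, and the difference is $0$. When $\hsigma(d(u))\ge m+1>\hsigma(u)$, the box $u$ is an ascent of both $\hsigma$ and $f(\hsigma)$, giving difference $-1$; when $\hsigma(u)\ge m+1>\hsigma(d(u))$, it is a descent of both, giving $+1$. Setting $\beta_j=\mathbf{1}[\hsigma(i,j)\ge m+1]$, with $\beta_0=\mathbf{1}[\pi(i)\ge m+1]$ recording the basement entry, all four cases are summarized by the single formula that the indicator difference equals $\beta_j-\beta_{j-1}$.

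Finally I would sum over each column and telescope. Using $l(i,j)+1=\mu_i-j+1$, which decreases by exactly $1$ as $j$ increases, summation by parts gives
\[
\sum_{j=1}^{\mu_i}(\beta_j-\beta_{j-1})(\mu_i-j+1)=\sum_{j=1}^{\mu_i}\beta_j-\beta_0\,\mu_i .
\]
The first term on the right is the number of boxes in column $i$ with entry $\ge m+1$, and summing it over all columns yields $L$. For the second term, the basement $\pi=\pi_1\pi_2$ satisfies $\pi(i)\ge m+1$ if and only if $i\ge m+1$, so $\beta_0\,\mu_i=\mu_i\,\mathbf{1}[i\ge m+1]$ and hence $\sum_i\beta_0\,\mu_i=\sum_{i\ge m+1}\mu_i=p$. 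Adding the column contributions gives $\maj(f(\hsigma))-\maj'(\hsigma)=L-p$, which (identifying $\maj'(\hsigma)$ with $\maj'(\sigma)$) is the asserted identity.

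The main obstacle is the telescoping step together with its bookkeeping: the whole identity hinges on recognizing that the signed count of block-changes up a column, weighted by $l(u)+1$, collapses to a clean expression precisely because $l(u)+1$ drops by one per row. The four-case verification in the second step is routine but must be done carefully, as it is what licenses replacing the descent/ascent indicators by the block-change quantity $\beta_j-\beta_{j-1}$; a single sign error there would propagate directly into the telescoping and corrupt the final count.
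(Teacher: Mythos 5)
Your proof is correct and follows essentially the same route as the paper's: both arguments rest on the observation that $f$ preserves the two blocks $[m]$ and $[m+1,n]$ while reversing order within each, so the descent/ascent discrepancy at a box $u$ is supported exactly on the cross-block pairs $u\nsim d(u)$, and both then evaluate the resulting correction column by column. Your summation-by-parts identity $\sum_{j}(\beta_j-\beta_{j-1})(\mu_i-j+1)=\sum_j\beta_j-\beta_0\mu_i$ is precisely a rigorous packaging of the paper's informal ``walk down the column, adding and subtracting'' argument, and your reading of $L$ as a count over $\dg'(\mu)$ only agrees with the paper's intent.
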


\begin{proof}
    Informally speaking, $L$ represents the number of boxes of $\dg'(\mu)$ that have fillings belonging to the set $[m+1,n]$ under the filling $f(\hsigma)$. We show that both sides of $(\ref{maj,maj'})$ count the same objects. First, we introduce a relation $\sim$ on the boxes of $\hdg(\mu)$ under the filling $\hsigma$. For any $u,v\in\hdg(\mu)$, we say that $u\sim v$ if both $\hsigma(u)$, $\hsigma(v)\in[m]$ or both $\hsigma(u)$, $\hsigma(v)\in[m+1,n]$.  By definition, $$\maj(f(\hsigma))=\sum_{u\in\Des(f(\hsigma))}(l(u)+1),$$ which we may rewrite as $$\maj(f(\hsigma))=\sum_{\substack{u\in \Des(f(\hsigma))\\u\sim d(u)}}(l(u)+1)+\sum_{\substack{u\in \Des(f(\hsigma))\\u\nsim d(u)}}(l(u)+1).$$ Notice that by definition of $f$, we have that $u\in\Des(f(\hsigma))$ and $u\sim d(u)$ if and only if $u\in\Asc(\hsigma)$ and $u\sim d(u)$. Similarly, $u\in \Des(f(\hsigma))$ and $u\nsim d(u)$ if and only if $u\in\Des(\hsigma)$ and $u\nsim d(u)$. Hence, we may write $$\maj(f(\hsigma))=\sum_{\substack{u\in \Asc(\hsigma)\\u\sim d(u)}}(l(u)+1)+\sum_{\substack{u\in \Des(f(\hsigma))\\u\nsim d(u)}}(l(u)+1),$$ which we finally rewrite as 
    \begin{equation}
        \label{maj of fhsig}
        \maj(f(\hsigma))=\sum_{u\in\Asc(\hat{\sigma})}(l(u)+1)-\sum_{\substack{u\in \Asc(\hat{\sigma})\\\hat{\sigma}(u)\nsim\hat{\sigma}(d(u))}}(l(u)+1)+\sum_{\substack{u\in \Des(\hat{\sigma})\\\hat{\sigma}(u)\nsim\hat{\sigma}(d(u))}}(l(u)+1).
    \end{equation} Notice that the first term of the right-hand side of (\ref{maj of fhsig}) is given by $\maj'(\hsigma)$ by definition. Additionally, notice that $-p+L$ yields the number of boxes in the first $m$ columns with fillings in $[m+1,n]$ and the negative number of boxes in the last $n-m$ columns with fillings in $[m]$. We show next that the last two terms of $(\ref{maj of fhsig})$ also count these boxes, which proves $(\ref{maj,maj'})$.

    Begin by choosing any column in $\hdg(\mu)$, and consider the highest box in this column. We initiate a process by which we read this column from top to bottom, either adding or subtracting $l(u)+1$, which at any point in this process can be thought of as the total number of boxes we have seen so far, depending on one of two conditions:
    \begin{enumerate}
        \item[(1)] If $\hsigma(u)\in[m]$ and $\hsigma(d(u))\in[m+1,n]$, subtract $l(u)+1.$ 
        \item[(2)] If $\hsigma(u)\in[m+1,n]$ and $\hsigma(d(u))\in[m]$, add $l(u)+1$.
    \end{enumerate}
    Otherwise, do nothing, and continue reading down the column. We are concerned with what the final sum of this process represents combinatorially. Observe that condition (1) is equivalent to the second term in $(\ref{maj of fhsig})$ and condition (2) is equivalent to the third term in $(\ref{maj of fhsig})$. Since we only add $l(u)+1$ when $\hsigma(u)\in[m+1,n]$ and $\hsigma(d(u))\in[m]$, the partial sum we have after adding indicates the total number of boxes $u$ in our column we have seen so far such that $\hsigma(u)\in[m+1,n]$. On the other hand, since we only subtract when $\hsigma(u)\in[m]$ and $\hsigma(d(u))\in[m+1,n]$, the partial sum we have after subtracting indicates the negative number of boxes $u$ in our column we have seen so far such that $\hsigma(u)\in[m]$.

    If the column we choose is any of the first $m$ columns, the last thing we do in this process is add, if we do anything at all, since the basement in the first $m$ columns is filled with entries in $[m]$. If we do nothing at all during this process, the final sum we have is $0$, indicating there are no boxes $u$ within the column such that $\hsigma(u)\in[m+1,n]$ and $\hsigma(d(u))\in[m]$. On the other hand, if the column we choose is any of the last $n-m$ columns, the last thing we do in this process is subtract, if we do anything at all, since the basement in the last $n-m$ columns is filled with entries in $[m+1,n]$. If we do nothing at all during this process, the final sum we have is $0$, indicating there are no boxes $u$ within the column such that $\hsigma(u)\in[m]$ and $\hsigma(d(u))\in[m+1]$. Therefore, completing this process for every column in $\hdg(\mu)$ counts the total number of boxes in the first $m$ columns with fillings in $[m+1,n]$ and the negative number of boxes in the last $n-m$ columns with fillings in $[m]$, which is what we wanted to show.   
\end{proof}

\begin{example}
For $\mu=(5,2,2,1,2,5)$, consider the the augmented filling of $\sigma\in\naf_\mu^{312564}$, where $\pi_1=312$ and $\pi_2=564$, depicted below.

\begin{center}
\begin{tikzpicture}[scale=0.5]
  \begin{scope}
    \foreach \x/\val in {0/3,1/1,2/2,3/5,4/6,5/4} {
      \draw (\x,1) rectangle ++(1,-1);
        \node at (\x+0.5,0.5) {\val};
    }
    \foreach \x/\val in {0/3,1/1,2/2,3/5,4/6,5/4} {
      \draw (\x,2) rectangle ++(1,-1);
        \node at (\x+0.5,1.5) {\val};
    }
    \foreach \x/\val in {0/3,1/1,2/2,4/6,5/4} {
      \draw (\x,3) rectangle ++(1,-1);
        \node at (\x+0.5,2.5) {\val};
    }
    \foreach \x/\val in {0/5,5/3} {
      \draw (\x,4) rectangle ++(1,-1);
        \node at (\x+0.5,3.5) {\val};
    }
    \foreach \x/\val in {0/2,5/1} {
      \draw (\x,5) rectangle ++(1,-1);
        \node at (\x+0.5,4.5) {\val};
    }
    \foreach \x/\val in {0/6,5/5} {
      \draw (\x,6) rectangle ++(1,-1);
        \node at (\x+0.5,5.5) {\val};
    }
    \draw[->] (0,5.4) to[out=180, in=180] node[left] {\(+1\)} (0,4.6);
    \draw[->] (0,4.4) to[out=180, in=180] node[left] {\(-2\)} (0,3.6);
    \draw[->] (0,3.4) to[out=180, in=180] node[left] {\(+3\)} (0,2.6);

    \draw[->] (6,5.4) to[out=0, in=0] node[right] {\(+1\)} (6,4.6);
    \draw[->] (6,3.4) to[out=0, in=0] node[right] {\(-3\)} (6,2.6);
    \draw[black, ultra thick] (0,1) -- (6,1);
  \end{scope}
\end{tikzpicture}
\end{center}

On either side of the diagram, we start from highest box in the column and either add $l(u)+1$ when $\hsigma(u)\in\{4,5,6\}$ and $\hsigma(d(u))\in[3]$, or subtract $l(u)+1$ when $\hsigma(u)\in[3]$ and $\hsigma(d(u))\in\{4,5,6\}$. On the left in the first column, the total sum of this process is $2$, which represents the total number of boxes in this column with fillings from $\{4,5,6\}$. On the right in the last column, the total sum of this process is $-2$, which represents the negative number of boxes in this column with fillings in $[3]$.
\end{example}

\begin{lemma}
\label{coinv lemma}
    Suppose $\pi_1\in S_m$ and $\pi_2 \in S_{[m+1,n]}$ for some $n\in\N$ and integer $m$ such that $0\le m\le n$. Let $\hsigma\in\naf_\mu^{\pi_1\pi_2}$ be arbitrary. Then
    \begin{equation*}
        \coinv(f(\hsigma))=\coinv'(\hsigma).
    \end{equation*}
\end{lemma}

\begin{proof}
    Recall that $\coinv(f(\hsigma))$ counts the number of coinversion triples of $f(\hsigma)$. Further, by Lemma \ref{coinv structure}, coinversion triples contain distinct entries. Without loss of generality, we consider only coniversion triples of Type I as a similar argument holds for coinversion triples of Type II. In Type I, the entries of a coninversion triple must increase clockwise. Thus, without loss of generality, we may assume that $f(\hsigma(u))<f(\hsigma(v))<f(\hsigma(w))$ for a coinversion triple $(u,v,w)$ of $f(\hsigma)$. If $f(\hsigma(u))\sim f(\hsigma(v))\sim f(\hsigma(w))$, then we have $\hsigma(u)>\hsigma(v)>\hsigma(w)$, so $(u,v,w)$ is an inversion triple with distinct entries in $\hsigma$. If $f(\hsigma(u))\nsim f(\hsigma(v))\sim f(\hsigma(w))$, then we must have $\hsigma(v)>\hsigma(w)>\hsigma(u)$, so $(u,v,w)$ is an inversion triple with distinct entries in $\hsigma$. Lastly, if $f(\hsigma(u))\sim f(\hsigma(v))\nsim f(\hsigma(w))$, the we must have $\hsigma(w)>\hsigma(u)>\hsigma(v)$, so $(u,v,w)$ is an inversion triple with distinct entries in $\hsigma$. Similarly, an inversion triple of $\hsigma$ with distinct entries always corresponds to a coinversion triple of $f(\hsigma)$. Hence, $\coinv(f(\hsigma))=\coinv'(\hsigma)$. 
\end{proof}

\begin{example}
    Consider the two diagrams below, where the boxes in $f(\hsigma)$ that are bolded and \textbf{\textcolor{blue}{blue}} form a coinversion triple of Type I, and the corresponding boxes in $\hsigma$ that are bolded and \textbf{\textcolor{blue}{blue}} form an inversion triple of Type I with distinct entries.
    
\begin{center}
\begin{tikzpicture}[scale=0.5]
  \begin{scope}
     \node at (-12.5,2) {$\hsigma=$};
    \foreach \x/\val in {-11/3,-10/1,-9/2,-8/4,-7/6,-6/5} {
      \draw (\x,1) rectangle ++(1,-1);
        \node at (\x+0.5,0.5) {\val};
    }
    \foreach \x/\val in {-11/3,-10/1,-9/2,-8/4,-7/6,-6/5} {
      \draw (\x,2) rectangle ++(1,-1);
        \node at (\x+0.5,1.5) {\val};
    }
    \foreach \x/\val in {-11/3,-10/1,-9/2,-7/4,-6/6} {
      \draw (\x,3) rectangle ++(1,-1);
        \node at (\x+0.5,2.5) {\val};
    }
    
    \draw[thick, blue] (-11,3) rectangle ++(1,-1);
    \node at (-10.5,2.5) {\textcolor{blue}{$\mathbf{3}$}};

    \draw[thick, blue] (-7,3) rectangle ++(1,-1);
    \node at (-6.5,2.5) {\textcolor{blue}{$\mathbf{4}$}};
    \foreach \x/\val in {-11/5,-6/3} {
      \draw (\x,4) rectangle ++(1,-1);
        \node at (\x+0.5,3.5) {\val};
    }
    \draw[thick, blue] (-11,4) rectangle ++(1,-1);
    \node at (-10.5,3.5) {\textcolor{blue}{$\mathbf{5}$}};

    \node at (-4.5,2) {,};
  
    \foreach \x/\val in {0/1,1/3,2/2,3/6,4/4,5/5} {
      \draw (\x,1) rectangle ++(1,-1);
        \node at (\x+0.5,0.5) {\val};
    }
    \foreach \x/\val in {0/1,1/3,2/2,3/6,4/4,5/5} {
      \draw (\x,2) rectangle ++(1,-1);
        \node at (\x+0.5,1.5) {\val};
    }
    \foreach \x/\val in {0/1,1/3,2/2,4/6,5/4} {
      \draw (\x,3) rectangle ++(1,-1);
        \node at (\x+0.5,2.5) {\val};
    }
    \draw[thick, blue] (0,3) rectangle ++(1,-1);
    \node at (0.5,2.5) {\textcolor{blue}{$\mathbf{1}$}};
    
    \draw[thick, blue] (4,3) rectangle ++(1,-1);
    \node at (4.5,2.5) {\textcolor{blue}{$\mathbf{6}$}};
    \draw[thick, blue] (0,4) rectangle ++(1,-1);
    \node at (0.5,3.5) {\textcolor{blue}{$\mathbf{5}$}};

    \draw (5,4) rectangle ++(1,-1);
    \node at (5.5,3.5) {$1$};
    
    \node at (-2,2) {$f(\hsigma)=$};

    \draw[black, ultra thick] (-11,1) -- (-5,1);
    \draw[black, ultra thick] (0,1) -- (6,1);
  \end{scope}
\end{tikzpicture}
\end{center}
\end{example}

We are now ready to prove Theorem \ref{complement thm}.

\begin{proof}[Proof of Theorem \ref{complement thm}]
    Let $\hsigma\in\naf_\mu^{\pi_1\pi_2}$. It follows simply from the definition of $f$ that  each occurrence of a box with filling $1\leq i\leq m$ is in one-to-one correspondence with an occurrence of a box with filling $m+1-i$. Similarly, each occurrence of a box with filling $m+1\leq j\leq n$ is in one-to-one correspondence with an occurrence of a box with filling $n+m+1-i$. This is why we make the substitution of variables on the right side of \eqref{generalized equation}. By Lemma \ref{coinv lemma}, it follows that $\coinv(f(\hsigma))=\coinv'(\hsigma)$, which is why we make the variable substitution $t=t^{-1}$ on the right side of \eqref{generalized equation}. Similarly, by Lemma \ref{maj lemma}, $\maj(f(\hsigma))=\maj'(\hsigma)-p+L$. Therefore, we make the variable substitution $q=q^{-1}$ to account for $\maj'(\hsigma)$ and multiply by $q^{-p}$ to account for the $-p$ term. Additionally, to each variable $x_j$ for $m+1\leq j\leq n$, we attach the variable $q$ to account for the $L$ term, which follows by definition of $L$. Summing over all $\hsigma\in\naf_\mu^{\pi_1\pi_2}$ yields the desired result.
\end{proof}


\section{Recovering the partially symmetric identity}\label{CL section}

We use Theorem~\ref{complement thm} to prove the Concha-Lapointe identity of Theorem~\ref{CL theorem}, which we restate here for the reader's convenience:

\begin{theorem}[=\,Theorem~\ref{CL theorem}]\label{CL theorem text}
Suppose $n\in \mathbb{N}$ and $m$ is an integer such that $0\le m \le n$. For any partition $\lambda$ with at most $m$ parts and any $\gamma=(\gamma_1,\dotsc,\gamma_{n-m})\in(\Z_{\ge 0})^{n-m}$, we have
\begin{equation}\label{CL identity text}
P_{\lambda|\gamma}(x_1,\dotsc,x_m,qx_{n},\dotsc,qx_{m+1};q^{-1},t^{-1})
= q^{\gamma_1+\dotsm+\gamma_{n-m}}t^{\mathrm{inv}(\gamma)-\ell(\omega_0^{[m+1,n]})}T_{\omega_0^{[m+1,n]}}P_{\lambda|\gamma},
\end{equation}
where $\mathrm{inv}(\gamma)=|\{i<j : \gamma_i>\gamma_j\}|$.
\end{theorem}

In the setting of Theorem~\ref{complement thm}, we assume:
\begin{itemize}
\item $\mu=\lambda|\gamma$ where $\lambda\in(\mathbb{Z}_{\ge 0})^m$ with $\lambda_1\ge\dotsm\ge\lambda_m$ and $\gamma\in(\mathbb{Z}_{\ge 0})^{n-m}$,
\item $\pi_1$ belongs to $S_m=S_{[m]}\subset S_n$,
\item $\pi_2$ is the identity element of $S_{[m+1,n]}$.
\end{itemize}
In this case, $\ell_\mu(\pi_1)$ counts the number of pairs $(i,j)\in[m]^2$ such that $i<j, \pi_1(i)>\pi_1(j),$ and $\lambda_i=\lambda_j$.

Let $(S_m)_\lambda$ denote the stabilizer of $\lambda$, which is a standard parabolic subgroup of $S_m$. Let $S_m^\lambda$ be the set of minimal coset representatives for $S_m/(S_m)_\lambda$. For arbitrary $\pi_1\in S_m$, let $\underline{\pi_1}\in (S_m)_\lambda$ be the minimal representative of the coset $\pi_1(S_m)_\lambda$.

\begin{lemma}\label{E min lemma}
    With the notation and assumptions above, one has the following:
    \begin{enumerate}
        \item For $\pi_1\in S_m^\lambda$, $\ell_\mu(\pi_1)=0$ and hence $E_\mu^{\pi_1}=T_{\pi_1}E_\mu$.
        \item For $\pi\in S_m$, $E_\mu^{\pi_1}=E_\mu^{\underline{\pi_1}}$.
    \end{enumerate}
\end{lemma}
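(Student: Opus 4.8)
The plan is to prove both parts using the standard theory of minimal coset representatives for the parabolic subgroup $(S_m)_\lambda\subset S_m$, together with the stabilizer relation \eqref{Estab}. Since $\lambda$ is a partition, its blocks of equal entries are intervals of consecutive indices, so $(S_m)_\lambda$ is the standard (Young) parabolic subgroup generated by the simple transpositions $\sigma_i$ with $\lambda_i=\lambda_{i+1}$, and the elements of $S_m^\lambda$ are exactly those permutations that are increasing on each such block.

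For part (1), I would first recall the observation made just before the lemma: in the present setting $\ell_\mu(\pi_1)$ counts the pairs $i<j$ in $[m]$ with $\lambda_i=\lambda_j$ and $\pi_1(i)>\pi_1(j)$, i.e.\ the inversions of $\pi_1$ lying within a single block. For $\pi_1\in S_m^\lambda$ the increasing-on-blocks characterization gives $\pi_1(i)<\pi_1(j)$ whenever $i<j$ lie in a common block, so there are no such inversions and $\ell_\mu(\pi_1)=0$. The definition \eqref{pbE} then immediately yields $E_\mu^{\pi_1}=t^{0}T_{\pi_1}E_\mu=T_{\pi_1}E_\mu$.

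For part (2), I would use the parabolic factorization $\pi_1=\underline{\pi_1}\,v$ with $\underline{\pi_1}\in S_m^\lambda$, $v\in(S_m)_\lambda$, and $\ell(\pi_1)=\ell(\underline{\pi_1})+\ell(v)$; concatenating reduced words then gives $T_{\pi_1}=T_{\underline{\pi_1}}T_v$. Two facts remain to be checked. First, $T_vE_\mu=t^{\ell(v)}E_\mu$: any reduced word for $v\in(S_m)_\lambda$ uses only generators $\sigma_i$ with $\lambda_i=\lambda_{i+1}$, each of which fixes $\mu=(\lambda,\gamma)$ (note $v$ fixes the $\gamma$-part since $v\in S_m$), so repeated application of \eqref{Estab} produces one factor of $t$ per letter. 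Second, $\ell(v)=\ell_\mu(\pi_1)$: since $v$ maps each block to itself it has no inversions between blocks, and because $\underline{\pi_1}$ is increasing on each block, a pair $i<j$ in a common block is a (within-block) inversion of $\pi_1=\underline{\pi_1}v$ if and only if it is an inversion of $v$; hence $\ell(v)$ equals the within-block inversion count $\ell_\mu(\pi_1)$. Combining these with part (1) applied to $\underline{\pi_1}$, I obtain $E_\mu^{\pi_1}=t^{-\ell_\mu(\pi_1)}T_{\pi_1}E_\mu=t^{-\ell_\mu(\pi_1)}t^{\ell(v)}T_{\underline{\pi_1}}E_\mu=T_{\underline{\pi_1}}E_\mu=E_\mu^{\underline{\pi_1}}$.

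I expect the main obstacle to be the bookkeeping that matches the normalization exponent $\ell_\mu(\pi_1)$ in \eqref{pbE} with the length $\ell(v)$ of the parabolic part: the whole argument hinges on $\ell_\mu$ being precisely the within-block inversion statistic, which is exactly what makes the factors $t^{\pm\ell_\mu(\pi_1)}$ cancel. The remaining ingredients—parabolic factorization with additive length, multiplicativity of $T$ on reduced words, and the iteration of \eqref{Estab}—are standard.
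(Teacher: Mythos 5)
Your proposal is correct and follows essentially the same route as the paper: part (1) via the observation that $\ell_\mu(\pi_1)$ is the within-block inversion count, which vanishes on minimal coset representatives, and part (2) via the parabolic factorization $\pi_1=\underline{\pi_1}\sigma$ with $T_{\pi_1}=T_{\underline{\pi_1}}T_\sigma$, the iteration of \eqref{Estab} to get $T_\sigma E_\mu=t^{\ell(\sigma)}E_\mu$, and the identification $\ell(\sigma)=\ell_\mu(\pi_1)$. You spell out a few details the paper leaves implicit (why $\sigma$ fixes $\mu$, and why within-block inversions of $\pi_1$ match those of $\sigma$), but the argument is the same.
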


\begin{proof}
    Since $\pi_1\in S_m$ and $\lambda$ is weakly decreasing, $\ell_\mu(\pi_1)$ is equal to the number of pairs $(i,j)\in [m]^2$ such that $i<j$, $\pi(i)>\pi(j)$, and $\lambda_i=\lambda_j$. But membership in $S_m^\lambda$ is characterized by $\pi_1(i)<\pi_1(j)$ for all $(i,j)\in [m]^2$ such that $i<j$ and $\lambda_i=\lambda_j$. This establishes the first statement.

    For the second statement, let $\pi_1\in S_m$ be arbitrary and write $\pi_1=\underline{\pi_1}\sigma$ where $\sigma\in (S_m)_\lambda$. Then $\ell(\pi_1)=\ell(\underline{\pi_1})+\ell(\sigma)$ and hence $T_{\pi_1}=T_{\underline{\pi_1}}T_\sigma$. Since $\sigma$ can be expressed as a product of simple transpositions fixing $\lambda$, we have $T_\sigma E_\mu=t^{\ell(\sigma)}E_\mu$ by \eqref{Estab}. Hence
        $$
        E_\mu^{\pi_1} = t^{-\ell_\mu(\pi_1)}T_{\underline{\pi_1}}T_\sigma E_\mu = t^{-\ell_\mu(\pi_1)+
        \ell(\sigma)}T_{\underline{\pi_1}}E_\mu = t^{-\ell_\mu(\pi_1)+
        \ell(\sigma)}E_\mu^{\underline{\pi_1}}.
        $$
    The factorization $\pi_1=\underline{\pi_1}\sigma$ gives that for all $(i,j)\in[m]^2$ such that $i<j$ and $\lambda_i=\lambda_j$, $\pi_1(i)>\pi_1(j)$ if and only if $\sigma(i)>\sigma(j)$. Thus $\ell_\mu(\pi_1)=\ell(\sigma)$ and we are done. 
\end{proof}


\begin{proof}[Proof of Theorem~\ref{CL theorem text}]

By the definition of $P_{\lambda|\gamma}$ and Lemma~\ref{E min lemma}, we have 
\begin{equation*}
    P_{\lambda|\gamma} = \sum_{\pi_1\in S_m^\lambda} T_{\pi_1}E_\mu = \sum_{\pi_1\in S_m^\lambda} E_\mu^{\pi_1},
\end{equation*}
and by the definition of $E^\pi_\mu$ it follows that
\begin{equation*}
    T_{\omega_0^{[m+1,n]}}P_{\lambda|\gamma} = \sum_{\pi_1\in S_m^\lambda} t^{\ell_\mu(\omega_0^{[m+1,n]})}E^{\pi_1\omega_0^{[m+1,n]}}_{\mu}.
\end{equation*}

Recall that $p=\sum_{i=m+1}^n\mu_i=\sum_{i=1}^{n-m}\gamma_i$. We now invoke Theorem~\ref{complement thm} to obtain
\begin{align*}
    q^p T_{\omega_0^{[m+1,n]}}P_{\lambda|\gamma} 
    &= \sum_{\pi_1\in S_m^\lambda} q^p t^{\ell_\mu(\omega_0^{[m+1,n]})}E^{\pi_1\omega_0^{[m+1,n]}}_{\mu}(x_1,\dotsc,x_n;q,t)\\
    &= \sum_{\pi_1\in S_m^\lambda} t^{\ell_\mu(\omega_0^{[m+1,n]})}E^{\pi_1^c}_{\mu}(x_m,\dotsc,x_1,qx_n,\dotsc,qx_{m+1};q^{-1},t^{-1})\\
    &= \sum_{\pi_1\in S_m^\lambda} t^{\ell_\mu(\omega_0^{[m+1,n]})}E^{\underline{\pi_1^c}}_{\mu}(x_m,\dotsc,x_1,qx_n,\dotsc,qx_{m+1};q^{-1},t^{-1}),
\end{align*}
where we also use Lemma~\ref{E min lemma} for the last equality.

The quantity $\ell_\mu(\omega_0^{[m+1,n]})$ is the number of pairs $(i,j)\in[m+1,n]^2$ such that $i<j$ and $\mu_i\le\mu_j$, which is exactly $\ell(\omega_0^{[m+1,n]})-\text{inv}(\gamma)$. Therefore
\begin{align*}
    q^p T_{\omega_0^{[m+1,n]}}P_{\lambda|\gamma} 
    &= t^{\ell(\omega_0^{[m+1,n]})-\text{inv}(\gamma)}\sum_{\pi_1\in S_m^\lambda} E^{\underline{\pi_1^c}}_\mu(x_m,\dotsc,x_1,qx_n,\dotsc,qx_{m+1};q^{-1},t^{-1})\\
    &= t^{\ell(\omega_0^{[m+1,n]})-\text{inv}(\gamma)} P_{\lambda|\gamma}(x_1,\dotsc,x_m,qx_n,\dotsc,qx_{m+1};q^{-1},t^{-1}),
\end{align*}
where we use the fact that $\pi_1\mapsto\underline{\pi_1^c}$ is a bijection from $S_m^\lambda$ to itself and, finally, the symmetry of $P_{\lambda|\gamma}$ in $x_1,\dotsc,x_m$.
\end{proof}

\section{Kazhdan-Lusztig involution}

Let $\mathcal{M}=\mathbb{Q}(q,t^{1/2})[x_1,\dotsc,x_n]$, where $t^{1/2}$ is a formal variable such that $(t^{1/2})^2=t$. As explained (in greater generality) in \cite{I}, the space $\mathcal{M}$ has the structure of \textit{maximal parabolic module} for the (positive) affine Hecke algebra $\mathcal{H}_n$. This action is generated by the Demazure-Lusztig operators $T_i$ for $i=1\dotsc,n-1$ and the operators of multiplication by $x_i$ for $i=1,\dotsc,n$. Moreover, the \textit{Kazhdan-Lusztig involution} $*$ on $\mathcal{H}_n$ induces the following $\mathbb{Q}$-linear involution on $\mathcal{M}$:
\begin{equation*}
f\longmapsto f^*=t^{\ell(\omega_0)}T_{\omega_0}^{-1}\omega_0(f(q^{-1},t^{-1})),
\end{equation*}
where $\omega_0=\omega_0^{[n]}$ is the longest element of $S_n$ and $f(q^{-1},t^{-1})$ denotes the image of $f$ under the $\mathbb{Q}$-algebra automorphism of $\mathcal{M}$ sending $q\mapsto q^{-1}, t^{1/2}\mapsto t^{-1/2}$, and $x_i\mapsto x_i$ for $i=1,\dotsc,n$. By definition, we have
\begin{equation*}
T_\sigma^* = T_{\sigma^{-1}}^{-1}
\end{equation*}
for all $\sigma \in S_n$.

\subsection{Non-symmetric case}

The following result of Ion \cite{I} describes the action of the Kazhdan-Lusztig involution on nonsymmetric Macdonald polynomials.

\begin{theorem}[{\cite[Theorem 4.8]{I}}]\label{I theorem}
For any $\mu=(\mu_1,\dotsc,\mu_n)\in(\Z_{\ge 0})^n$, the normalized non-symmetric Macdonald polynomial $\tilde{E}_\mu=t^{\frac{\mathrm{inv}(\mu)}{2}}E_\mu$ satisfies $\tilde{E}_\mu^*=\tilde{E}_\mu$.
\end{theorem}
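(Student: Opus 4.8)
The plan is to unwind the definition of the Kazhdan--Lusztig involution and reduce the claim to an evaluation of the permuted basement polynomial $E_\mu^{\omega_0}$, where $\omega_0\in S_n$ is the long element. Since $*$ is $\mathbb{Q}$-linear and acts on the scalar field by $q\mapsto q^{-1}$, $t^{1/2}\mapsto t^{-1/2}$ (so that $(t^{\mathrm{inv}(\mu)/2})^*=t^{-\mathrm{inv}(\mu)/2}$ factors out of $\tilde E_\mu^*=t^{\mathrm{inv}(\mu)/2}E_\mu$), the target identity $\tilde E_\mu^*=\tilde E_\mu$ is equivalent to
\begin{equation*}
E_\mu^* = t^{\mathrm{inv}(\mu)}E_\mu.
\end{equation*}
This is the identity I would aim to establish.

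The key computational input is Theorem~\ref{complement thm} in the degenerate case $m=0$, taking $\pi_1$ empty and $\pi_2$ the identity of $S_n=S_{[1,n]}$. Here $\pi_2^c=\omega_0$ and $p=|\mu|:=\mu_1+\dotsm+\mu_n$, so \eqref{generalized equation} reads
\begin{equation*}
E_\mu^{\omega_0}(x_1,\dotsc,x_n;q,t) = q^{-|\mu|}\,E_\mu(qx_n,\dotsc,qx_1;q^{-1},t^{-1}).
\end{equation*}
Because $E_\mu$ is homogeneous of degree $|\mu|$ in the $x$-variables, the common scaling by $q$ on the right contributes exactly $q^{|\mu|}$, cancelling the prefactor $q^{-|\mu|}$, while the reversal of variables is precisely the action of the automorphism $\omega_0$. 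I would thus obtain the clean relation $E_\mu^{\omega_0}(x;q,t)=\omega_0\bigl(E_\mu(q^{-1},t^{-1})\bigr)$, which matches the expression occurring inside the definition of $*$.

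It then remains to substitute this into $E_\mu^*=t^{\ell(\omega_0)}T_{\omega_0}^{-1}\omega_0\bigl(E_\mu(q^{-1},t^{-1})\bigr)$ and to expand via the definition $E_\mu^{\omega_0}=t^{-\ell_\mu(\omega_0)}T_{\omega_0}E_\mu$ from \eqref{pbE}. The operators $T_{\omega_0}^{-1}$ and $T_{\omega_0}$ cancel, leaving $E_\mu^*=t^{\ell(\omega_0)-\ell_\mu(\omega_0)}E_\mu$. To finish I would verify the combinatorial identity $\ell(\omega_0)-\ell_\mu(\omega_0)=\mathrm{inv}(\mu)$: since $\omega_0(i)>\omega_0(j)$ for every pair $i<j$, the quantity $\ell_\mu(\omega_0)$ counts exactly the pairs $i<j$ with $\mu_i\le\mu_j$; subtracting this from the total $\ell(\omega_0)=\binom{n}{2}$ of all such pairs leaves precisely the pairs with $\mu_i>\mu_j$, namely $\mathrm{inv}(\mu)$.

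Most of the conceptual content is absorbed into Theorem~\ref{complement thm}, so the main obstacle is bookkeeping rather than a genuinely hard step. The point demanding the most care is the use of Theorem~\ref{complement thm} at $m=0$, a case excluded ``to avoid triviality'' in its statement: I would either check that its proof passes through verbatim when $\pi_1$ is empty (the bijection $f$ and Lemmas~\ref{maj lemma} and \ref{coinv lemma} then involve only the last $n-m$ columns, which is all columns) or, equivalently, invoke Corollary~\ref{hhl corollary} directly. One must also track the scalar twist under $*$ carefully, using that $T_{\omega_0}$ and $\omega_0$ are $\mathbb{Q}(q,t^{1/2})$-linear so that the power of $t^{1/2}$ passes cleanly through the involution.
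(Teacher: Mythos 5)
Your argument is correct, and it is essentially the route the paper itself gestures at: the paper does not prove Theorem~\ref{I theorem} (it imports it from \cite{I}), but immediately after stating it records that $\tilde{E}_\mu^*=\tilde{E}_\mu$ is equivalent to the $m=0$, $k=n$ case of \eqref{CL identity}, with the $q$-shifts removed by homogeneity. Your proposal simply runs that equivalence in the productive direction: the degenerate case of Theorem~\ref{complement thm} gives $E_\mu^{\omega_0}(x;q,t)=\omega_0\bigl(E_\mu(q^{-1},t^{-1})\bigr)$ after the homogeneity cancellation, and then \eqref{pbE} together with the count $\ell(\omega_0)-\ell_\mu(\omega_0)=\binom{n}{2}-|\{i<j:\mu_i\le\mu_j\}|=\mathrm{inv}(\mu)$ yields $E_\mu^*=t^{\mathrm{inv}(\mu)}E_\mu$, which is exactly $\tilde{E}_\mu^*=\tilde{E}_\mu$ after the scalar $t^{\pm\mathrm{inv}(\mu)/2}$ is tracked through the $\mathbb{Q}$-linear involution. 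All steps check out, including your handling of the boundary case of Theorem~\ref{complement thm}: with $\pi_1$ empty (equivalently, taking $m=n$ and $\pi_1$ the identity, which the paper notes recovers Corollary~\ref{hhl corollary}) the relation $\sim$ in Lemmas~\ref{maj lemma} and~\ref{coinv lemma} becomes trivial and $L=p$, so \eqref{generalized equation} holds verbatim. The net effect is that your write-up upgrades the paper's citation of \cite[Theorem 4.8]{I} to a self-contained combinatorial proof, with no circularity since Theorem~\ref{complement thm} is established independently.
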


We observe that $\tilde{E}_\mu^*=\tilde{E}_\mu$ is equivalent to the non-symmetric case ($m=0$) of the Concha-Lapointe identity \eqref{CL identity}, namely:
\begin{equation}\label{m0equiv}
E_\mu(x_{n},\dotsc,x_{1};q^{-1},t^{-1})
= t^{\mathrm{inv}(\mu)-\ell(\omega_0)}T_{\omega_0}E_\mu(x;q,t).
\end{equation}
Here we use the fact that $E_\mu$ is homogeneous of degree $\mu_1+\dotsm+\mu_n$ to eliminate the $q$-shifts.

\subsection{Partially symmetric case}

For $0\le m\le n$, define
\begin{align*}
e_m = t^{-\frac{\ell(\omega_0^{[m]})}{2}}\sum_{\sigma\in S_m} T_\sigma.
\end{align*}
It is well-known and straightforward to verify that $e_m^* = e_m$ and also that 
\begin{equation*}
e_m\mathcal{M}=\mathbb{Q}(q,t)[x_1,\dotsc,x_n]^{S_m}.
\end{equation*}
Therefore, we may restrict the Kazhdan-Lusztig involution to $e_m\mathcal{M}$, where it is given by:
\begin{equation}\label{KL eM}
(e_mf)^*=e_m^*f^*=e_mt^{\ell(\omega_0)}T_{\omega_0}^{-1}\omega_0(f(q^{-1},t^{-1})).
\end{equation}

Now we are ready to prove Theorem~\ref{KL theorem}, which gives the partially symmetric extension of Theorem~\ref{I theorem} and which we restate here for the reader's convenience.

\begin{theorem}[=\,Theorem~\ref{KL theorem}]\label{KL theorem text}
Let $n \in \mathbb{N}$ and $0\le m \le n$. For any partition $\lambda$ with at most $m$ parts and any $\gamma=(\gamma_1,\dotsc,\gamma_{n-m})\in(\Z_{\ge 0})^{n-m}$,
\begin{equation}\label{KL P text}
P_{\lambda|\gamma}^* = t^{\mathrm{inv}(\lambda|\gamma)}P_{\lambda|\gamma},
\end{equation}
where $\mathrm{inv}(\lambda|\gamma)=\mathrm{inv}(\gamma)+|\{(i,j)\in[m]\times[n-m]:\lambda_i>\gamma_j\}|$. Furthermore, equations \eqref{CL identity text} and \eqref{KL P text} are equivalent.
\end{theorem}

\begin{proof}
For $\mu=\lambda|\gamma$ with $\lambda\in(\mathbb{Z}_{\ge 0})^m$ weakly decreasing and $\gamma\in(\mathbb{Z}_{\ge 0})^{n-m}$, we have
\begin{align*}
e_m \tilde{E}_\mu 
&= t^{\frac{\inv(\mu)-\ell(\omega_0^{[m]})}{2}}\sum_{\sigma\in S_m}T_\sigma E_\mu\\
&= t^{\frac{\inv(\mu)-\ell(\omega_0^{[m]})}{2}}\left(\sum_{\sigma\in (S_m)_\lambda}t^{\ell(\sigma)}\right) P_{\lambda|\gamma}.
\end{align*}
Since $(e_m \tilde{E}_\mu)^* = e_m^*\tilde{E}_\mu^* = e_m \tilde{E}_\mu$ by Theorem~\ref{I theorem}, we obtain
\begin{equation}\label{P star 0}
t^{\frac{-\inv(\mu)+\ell(\omega_0^{[m]})}{2}}\left(\sum_{\sigma\in (S_m)_\lambda}t^{-\ell(\sigma)}\right) P_{\lambda|\gamma}^* = t^{\frac{\inv(\mu)-\ell(\omega_0^{[m]})}{2}}\left(\sum_{\sigma\in (S_m)_\lambda}t^{\ell(\sigma)}\right) P_{\lambda|\gamma}.
\end{equation}
Denoting by $\omega_{0,\lambda}^{[m]}$ the longest element of the stabilizer $(S_m)_\lambda$, we have the identity
\begin{equation*}
\sum_{\sigma\in (S_m)_\lambda}t^{-\ell(\sigma)} = t^{-\ell(\omega_{0,\lambda}^{[m]})}\sum_{\sigma\in (S_m)_\lambda}t^{\ell(\sigma)},
\end{equation*}
and hence \eqref{P star 0} reduces to
\begin{align}\label{P star 1}
P_{\lambda|\gamma}^*
= t^{\inv(\mu)+\ell(\omega_{0,\lambda}^{[m]})-\ell(\omega_0^{[m]})}P_{\lambda|\gamma}.
\end{align}
Since $\lambda$ is weakly decreasing, we have
\begin{equation*}
\text{inv}(\mu)+\ell(\omega_{0,\lambda}^{[m]})-\ell(\omega_0^{[m]})=\text{inv}(\gamma)+|\{(i,j)\in[m]\times[n-m]:\lambda_i>\gamma_j\}|
\end{equation*}
and this proves the first assertion of Theorem~\ref{KL theorem text}.

To connect with the Concha-Lapointe identity \eqref{CL identity text} we need to work out the left-hand side of \eqref{P star 1}. Using \eqref{KL eM} for the Kazhdan-Lusztig involution, we may write this as
\begin{align}\label{P star 2}
P_{\lambda|\gamma}^* &= \frac{t^{-\frac{\ell(\omega_0^{[m]})}{2}}}{\sum_{\sigma\in (S_m)_\lambda}t^{-\ell(\sigma)}}t^{\ell(\omega_0)} e_m T_{\omega_0}^{-1}\omega_0 (E_\mu(q^{-1},t^{-1})).
\end{align}
In order to pass from $T_{\omega_0}^{-1}$ in this formula to $T_{\omega_0^{[m+1,n]}}^{-1}$ in the Concha-Lapointe identity, we make use of the following eigenoperator for non-symmetric Macdonald polynomials (cf. \cite[Prop. 3.3.1]{C:book}):
\begin{equation*}
Y_{-\varpi_m} = t^{-m(m-1)/2}(T_{m}^{-1}\dotsm T_{n-1}^{-1})\dotsm(T_2^{-1}\dotsm T_{n-m+1}^{-1})(T_{1}^{-1}\dotsm T_{n-m}^{-1})\pi^{-m},
\end{equation*}
where $\varpi_m$ stands for the $m$-th $GL_n$-fundamental weight and $\pi^{-1}$ is the operator
\begin{equation*}
\pi^{-1}f=f(qx_n,x_1,\dotsc,x_{n-1}).
\end{equation*}
Here we need to assume that $m>0$, and there is no harm in doing so, since the equivalence has already been observed in the $m=0$ case in \eqref{m0equiv}.

The eigenvalues of $Y_{-\varpi_m}$ on $E_\mu$ for $\mu=\lambda|\gamma$ are given as follows:
\begin{equation}\label{YE}
Y_{-\varpi_m}E_\mu = q^{\lambda_1+\dotsm+\lambda_m}t^{-\sum_{i=1}^m b_\mu(i)} E_\mu,
\end{equation}
where $b_\mu(i)$ is equal to
\begin{align}\label{eigenvalues}
|\{j\in[m]:j<i,\lambda_j>\lambda_i\}|+\{j\in[m]:j>i,\lambda_j=\lambda_i\}+\{j\in[n-m]:\gamma_j\ge\lambda_i\}|.
\end{align}

Using \eqref{YE}, we can write \eqref{P star 2} as
\begin{equation}\label{P star 3}
P_{\lambda|\gamma}^*
=\frac{t^{-\frac{\ell(\omega_0^{[m]})}{2}}}{\sum_{\sigma\in (S_m)_\lambda}t^{-\ell(\sigma)}}t^{\ell(\omega_0)-\sum_{i=1}^m b_\mu(i)}
q^{\lambda_1+\dotsm+\lambda_m}e_mT_{\omega_0}^{-1}\omega_0((Y_{-\varpi_m}E_\mu)(q^{-1},t^{-1})).
\end{equation}
It is straightforward to verify that for all $i=1,\dotsc, n-1$,
\begin{equation}\label{T star}
\omega_0(T_i^{-1}f)(q^{-1},t^{-1}) = T_{n-i}\,\omega_0(f(q^{-1},t^{-1})),
\end{equation}
and hence
\begin{align*}
&T_{\omega_0}^{-1} \omega_0 ((Y_{-\varpi_m}f)(q^{-1},t^{-1}))\\
&= t^{m(m-1)/2}T_{\omega_0}^{-1}(T_{n-m}\dotsm T_{1})\dotsm(T_{n-2}\dotsm T_{m-1})(T_{n-1}\dotsm T_{m})\omega_0((\pi^{-m}f)(q^{-1},t^{-1}))\\
&= t^{m(m-1)/2} T_{\omega_0^{[m]}}^{-1}T_{\omega_0^{[m+1,n]}}^{-1}\omega_0((\pi^{-m}f)(q^{-1},t^{-1}))\\
&= t^{m(m-1)/2} T_{\omega_0^{[m]}}^{-1}T_{\omega_0^{[m+1,n]}}^{-1}(f(q^{-1}x_m,\dotsc,q^{-1}x_1,x_n\dotsc,x_{m+1};q^{-1},t^{-1})),
\end{align*}
for all $f=f(q,t)=f(x_1,\dotsc,x_n;q,t)\in\mathcal{M}$. In the second equality above, we use the reduced decomposition
\begin{equation*}
\omega_0 = \omega_0^{[m]}\omega_0^{[m+1,n]}(\sigma_{m}\dotsm \sigma_{n-1})(\sigma_{m-1}\dotsm \sigma_{n-2})\dotsm(\sigma_1\dotsm \sigma_{n-m}),
\end{equation*}
together with the fact that the $T_i^{-1}$ for $i=1,\dotsc,n-1$ satisfy the braid relations.

Returning to \eqref{P star 3} and using
\begin{align*}
e_mT_{\omega_0^{[m]}}^{-1}=t^{-\ell(\omega_0^{[m]})}e_m=t^{-m(m-1)/2}e_m,\qquad e_mT_{\omega_0^{[m+1,n]}}^{-1}=T_{\omega_0^{[m+1,n]}}^{-1}e_m,
\end{align*}
we can now write
\begin{align}
P_{\lambda|\gamma}^* &= \frac{t^{-\frac{\ell(\omega_0^{[m]})}{2}}}{\sum_{\sigma\in (S_m)_\lambda}t^{-\ell(\sigma)}}t^{\ell(\omega_0)-\sum_{i=1}^m b_\mu(i)} q^{\lambda_1+\dotsm+\lambda_m}\times\notag\\
&\qquad\qquad\qquad\qquad T_{\omega_0^{[m+1,n]}}^{-1}e_mE_\mu(q^{-1}x_{m},\dotsc,q^{-1}x_1,x_n,\dotsc,x_{m+1};q^{-1},t^{-1}).\label{P star 4}
\end{align}
Since $E_\mu$ is homogeneous of degree $\sum_{i=1}^m\lambda_i+\sum_{i=1}^{n-m}\gamma_i$ (and recall that $p=\sum_{i=1}^{n-m}\gamma_i$), the right-hand side of \eqref{P star 4} is equal to
\begin{align*}
&\frac{t^{-\frac{\ell(\omega_0^{[m]})}{2}}}{\sum_{\sigma\in (S_m)_\lambda}t^{-\ell(\sigma)}}t^{\ell(\omega_0)-\sum_{i=1}^m b_\mu(i)} q^{-p}\times\\
&\qquad\qquad\qquad\qquad T_{\omega_0^{[m+1,n]}}^{-1}e_mE_\mu(x_{m},\dotsc,x_1,qx_{n},\dotsc,qx_{m+1};q^{-1},t^{-1}).
\end{align*}
Now we factor out the action of permutations on the variables to write
\begin{align*}
&e_mE_\mu(x_{m},\dotsc,x_1,qx_{n},\dotsc,qx_{m+1};q^{-1},t^{-1})\\
&\qquad\qquad\qquad\qquad =\omega_0^{[m+1,n]}e_m\omega_0^{[m]}E_\mu(x_{1},\dotsc,x_m,qx_{m+1},\dotsc,qx_n;q^{-1},t^{-1}),
\end{align*}
and we repeatedly apply \eqref{T star} with $i=1,\dotsc,m-1$ and $\omega_0^{[m]}$ in place of $\omega_0$ to write
\begin{align*}
&e_m\omega_0^{[m]}E_\mu(x_{1},\dotsc,x_m,qx_{m+1},\dotsc,qx_n;q^{-1},t^{-1})\\
&\qquad\qquad\qquad\qquad = \omega_0^{[m]}(e_m^*E_\mu)(x_{1},\dotsc,x_m,qx_{m+1},\dotsc,qx_n;q^{-1},t^{-1})\\
&\qquad\qquad\qquad\qquad = \omega_0^{[m]}(e_m E_\mu)(x_{1},\dotsc,x_m,qx_{m+1},\dotsc,qx_n;q^{-1},t^{-1}).
\end{align*}
Putting these steps together, we can write \eqref{P star 4} as
\begin{align*}
P_{\lambda|\gamma}^* &= t^{\ell(\omega_0)-\sum_{i=1}^m b_\mu(i)} q^{-p}T_{\omega_0^{[m+1,n]}}^{-1}\omega_0^{[m+1,n]}P_{\lambda|\gamma}(x_1,\dotsc,x_m,qx_{m+1},\dotsc,qx_n;q^{-1},t^{-1}).
\end{align*}

Finally, we see that the equivalence of \eqref{CL identity text} and \eqref{KL P text} boils down to the equation
\begin{equation*}
\ell(\omega_0)-\sum_{i=1}^mb_\mu(i)-\text{inv}(\gamma)-|\{(i,j)\in[m]\times[n-m]:\lambda_i>\gamma_j\}|=\ell(\omega_0^{[m+1,n]})-\text{inv}(\gamma).
\end{equation*}
To obtain the desired quantity $\ell(\omega_0^{[m+1,n]})-\text{inv}(\gamma)$ from the left-hand side, we substitute \eqref{eigenvalues} for each $b_\mu(i)$ and then cancel from $\ell(\omega_0)$ all pairs $(i,j)\in[n]^2$ with $i<j$ except those with $i,j\in[m+1,n]$.
\end{proof}

\bibliographystyle{plain}
\bibliography{bibliography}

\end{document}